\DeclareMathAlphabet\EuScript{U}{eus}{m}{n}
\SetMathAlphabet\EuScript{bold}{U}{eus}{b}{n}
\def\CP{\mathbb{CP}}
\def\hookar{\ar@{^{(}->}}
\def\spec{\mathsf{Spec}}
\def\nov{r}
\def\fuk{\EuF}
\def\bc{\mathsf{bc}}
\newcommand{\iii}{\mathbf{i}}
\newcommand{\fg}{\mathfrak{g}}
\newcommand{\fm}{\mathfrak{m}}
\newcommand{\fh}{\mathfrak{h}}
\newcommand{\power}[1]{\left[\!\left[ #1 \right]\!\right]}
\newcommand{\laurents}[1]{\left(\!\left( #1 \right)\!\right)}
\newcommand{\wt}[1]{\tilde{#1}}
\def\cM{\mathcal{M}}
\def\mkah{\cM_{K\ddot{a}h}}
\def\mbarkah{\overline{\cM}_{K\ddot{a}h}}
\def\mbarhatkah{\hat{\overline{\cM}}_{K\ddot{a}h}}
\def\mhatkah{\hat{\cM}_{K\ddot{a}h}}
\def\mbarcpx{\overline{\cM}_{cpx}}
\def\mbarhatcpx{\hat{\overline{\cM}}_{cpx}}
\def\mhatcpx{\hat{\cM}_{cpx}}
\def\mcpx{\cM_{cpx}}
\def\and{\, \& \,}
\def\cM{\mathcal{M}}
\def\Dcoh{\mathsf{DCoh}}
\def\Dfuk{\mathsf{DFuk}}
\def\fuk{\mathsf{Fuk}}
\def\wfuk{\mathsf{WFuk}}
\def\Dwfuk{\mathsf{DWFuk}}
\def\cA{\mathsf{A}}
\def\cB{\mathsf{B}}
\def\C{\mathbb{C}}
\def\R{\mathbb{R}}
\def\Z{\mathbb{Z}}
\def\End{\mathsf{End}}
\def\EuX{\EuScript{X}}
\def\EuC{\EuScript{C}}
\def\EuO{\EuScript{O}}
\def\id{\mathrm{id}}
\def\Hom{\mathsf{Hom}}
\def\spec{\mathsf{Spec}}
\def\perf{\mathsf{Perf}}
\newtheorem{thm}{Theorem}[section]
\numberwithin{equation}{section}
\newtheorem{strat}{Strategy}
\newtheorem{rmk}[thm]{Remark}
\newtheorem{example}[thm]{Example}
\newtheorem{defn}[thm]{Definition}
\newtheorem{conj}[thm]{Conjecture}
\begin{document}

\title{Versality in mirror symmetry}

\author{Nick Sheridan}

\begin{abstract}
One of the attractions of homological mirror symmetry is that it not only implies the previous predictions of mirror symmetry (e.g., curve counts on the quintic), but it should in some sense be `less of a coincidence' than they are and therefore easier to prove. 
In this survey we explain how Seidel's approach to mirror symmetry via versality at the large volume/large complex structure limit makes this idea precise.
\end{abstract}

\maketitle

\tableofcontents

\section{Introduction}
\label{sec:int}

\subsection{Mirror symmetry for Calabi--Yaus}

Let us introduce the geometry that we will study. 
We will call a complex manifold $X$ \emph{Calabi--Yau} if $c_1(X) = 0$. 
A \emph{complexified K\"ahler form} is a closed form $\omega^\C = B+\iii\omega \in \Omega^{1,1}(X;\C)$, such that $\omega$ is a K\"ahler form. 
We define a \emph{Calabi--Yau K\"ahler manifold} to be a complex Calabi--Yau 
equipped with a complexified K\"ahler form.

We will consider invariants of Calabi--Yau K\"ahler manifolds, which only depend on part of the structure:
\begin{itemize}
\item The \emph{$B$-model} refers to invariants of the complex manifold $X$.
\item The \emph{$A$-model} refers to invariants of the symplectic manifold $(X,\omega)$ together with the `$B$-field' $B \in \Omega^2(X;\R)$.\footnote{The `$A$-model/$B$-model' terminology is inherited from \cite{Witten1992}.}
\end{itemize}

The \emph{mirror symmetry conjecture} posits the existence of pairs $(X,X^\circ)$ of Calabi--Yau K\"ahler manifolds whose $A$- and $B$-models are swapped:
\begin{equation}
\label{eqn:meta_ms}
\xymatrix{A(X) \ar@{<->}[dr] & A(X^\circ)\ar@{<->}[dl] \\
B(X) & B(X^\circ).}
\end{equation}
We're being vague because the definition of the $A$- and $B$-models, and the corresponding notion of equivalence implied by the arrows `$\leftrightarrow$' above, is different in different contexts. 
The `mirror' in the name refers to one of the characteristic features of mirror pairs, namely that the Hodge diamond of $X^\circ$ is a reflection of that of $X$ in an axis on a $45^\circ$ angle: $h^{p,q}(X^\circ) = h^{n-p,q}(X)$.\footnote{An early hint of mirror symmetry came when this peculiar symmetry showed up in a census of Hodge numbers of Calabi--Yau hypersurfaces in weighted projective spaces: see the celebrated Figure 1 in \cite{Candelas1990}.}

\begin{rmk}
\label{rmk:wiigf}
Given a result or structure in algebraic geometry, we can hope that it has a `mirror' in symplectic geometry (and vice-versa). 
If we had a sufficiently precise understanding of mirror symmetry the mirror result would be true `by mirror symmetry'. 
So far we usually do not have such a precise understanding, and to date mirror symmetry has been most useful as a motivating principle: we search for the mirror result or structure within its own world, and deal with it there without reference to mirror symmetry (see Remark \ref{rmk:hmsapps}).\footnote{In fact according to Givental \cite{Givental1999}, ``the mirror conjecture resembles a universal `physical' principle (like the Energy Conservation Law which is to be modified every time it conflicts with experimental data) rather than a precise mathematical conjecture which is to be proved or disproved.''} However, our focus in this paper is on proving precise versions of the mirror conjecture.
\end{rmk}

We can study the two arrows in \eqref{eqn:meta_ms} independently, so it is simplest to focus our attention on one of them. 
For the rest of the paper we will study the equivalence $A(X) \leftrightarrow B(X^\circ)$.

In order for the conjecture to have content, we need some interesting examples of mirror pairs $(X,X^\circ)$, and also to say what $A(X)$ and $B(X^\circ)$ are. 
We will start by addressing the first question. 
In this paper we will mainly be interested in compact Calabi--Yau mirror pairs $(X,X^\circ)$.

\begin{example}
\label{eg:ell_curve}
The mirror to an elliptic curve is another elliptic curve. 
\end{example}

\begin{example}
\label{eg:quartic}
Let $X$ be a \emph{quartic surface}, i.e., a smooth degree-$4$ hypersurface in $\CP^3$. The family of \emph{mirror quartics} $X^\circ_q$ is obtained in three stages: first, we consider hypersurfaces of the form 
\[\{z_1z_2z_3z_4 = q (z_1^4+z_2^4+z_3^4+z_4^4)\} \subset \CP^3;\]
second we take the quotient by $G := \ker((\Z/4)^4/(1,1,1,1) \xrightarrow{+} \Z/4)$, acting by multiplication of the homogeneous coordinates by powers of $i$; third, we resolve the six $A_3$ singularities of this quotient, to obtain the smooth complex manifold $X^\circ_q$.
\end{example}

\begin{example}
\label{eg:quintic}
Let $X$ be a \emph{quintic threefold}, i.e., a smooth degree-$5$ hypersurface in $\CP^4$. 
The construction of the family of \emph{mirror quintics} $X^\circ_q$ is analogous to that of the family of mirror quartics, replacing `$4$' with `$5$' everywhere (the resolution procedure is more complicated though: there are $100$ exceptional divisors). 
\end{example}

Batyrev gave the following general construction of mirror Calabi--Yau hypersurfaces in toric varieties \cite{Batyrev1993}:

\begin{example}
\label{eg:Batyrev}
Let $\Delta \subset \R^n$ be a lattice polytope. 
There is a polarized toric variety $(V,\mathcal{L})$ such that the lattice points in $\Delta$ are in bijection with a basis for the global sections of $\mathcal{L}$ (see \cite[\S 3.4]{Fulton1993}).
If $\Delta$ contains the origin then we can define the \emph{dual} polytope $\Delta^\circ$: it is the set of points in $\R^n$ whose inner product with every point in $\Delta$ is $\ge -1$. 
If $\Delta^\circ$ is also a \emph{lattice} polytope, we say $\Delta$ is \emph{reflexive}. 
The dual polytope $\Delta^\circ$ corresponds to a dual polarized toric variety $(V^\circ,\mathcal{L}^\circ)$. 
If $\Delta$ is reflexive, then the hyperplane sections $X \subset V$ and $X^\circ \subset V^\circ$ (more precisely, their crepant resolutions, if they are singular) are Calabi--Yau, and conjectured to be mirror. 
\end{example}

Example \ref{eg:quartic} (respectively, Example \ref{eg:quintic}) can be obtained from Batyrev's construction by taking $\Delta$ to be the lattice polytope corresponding to the polarization $\mathcal{L} = \mathcal{O}(4)$ of $\CP^3$ (respectively, the polarization $\mathcal{L} = \mathcal{O}(5)$ of $\CP^4$).

We do not have a complete conjectural list of mirror pairs. The most general conjectural construction is that of Gross and Siebert \cite{Gross2013}. 

\begin{rmk}
Other influential mirror constructions that we've omitted include \cite{Greene1990,Borisov1993,Dolgachev1996,Batyrev1994}. 
We've also omitted the string-theoretic history of mirror symmetry: see, e.g., \cite{Hori2003}.
\end{rmk}

\subsection{Moduli spaces}
\label{subsec:moduli}

Having described some examples of mirror pairs $(X,X^\circ)$, the next step is to explain what $A(X)$ and $B(X^\circ)$ are. 
However before we get there, we will discuss the parameters on which these invariants depend.

Let $X$ be a Calabi--Yau K\"ahler manifold.  
The \emph{K\"ahler moduli space} $\mkah(X)$ is the `moduli space of complexified K\"ahler forms on $X$', with tangent space $T\mkah(X) \cong H^{1,1}(X;\C)$.
Similarly the \emph{complex moduli space} $\mcpx(X)$ is the `moduli space of complex structures on $X$', with tangent space $T\mcpx(X) \cong H^1(X,TX)$.
A slightly more precise version of mirror symmetry says that there should exist an isomorphism
\begin{equation}
\label{eqn:mirr_map}
\psi: \mkah(X) \xrightarrow{\sim} \mcpx(X^\circ)
\end{equation}
called the `mirror map', and an equivalence
\begin{equation}
 A\left(X,\omega^\C_p\right) \leftrightarrow B\left(X^\circ_{\psi(p)}\right).
\end{equation}

\begin{rmk}
Note that the dimension of $\mkah(X)$ is $h^{1,1}(X)$, while the dimension of $\mcpx(X^\circ)$ is $h^{n-1,1}(X^\circ)$ by Serre duality. 
The existence of the mirror map implies these dimensions should be the same, which is consistent with the Hodge diamond flip $h^{p,q}(X) = h^{n-p,q}(X^\circ)$. 
\end{rmk}

\begin{example}
\label{eg:ell_curve_ms}
If $X^\circ$ is an elliptic curve, $\mcpx(X^\circ)$ is the well-known moduli space $\mathcal{M}_{1,1}$ of complex elliptic curves. 
It is isomorphic to the quotient of the upper half-plane by $\text{SL}_2(\Z)$, and is a genus-zero curve with one cusp and two orbifold points.
\end{example}

\begin{example}
\label{eg:quartic_ms}
We saw in Example \ref{eg:quartic} that the mirror quartic $X^\circ_q$ depended on a complex parameter $q \in \CP^1$. 
Deforming $q$ does not give all deformations of the complex structure of $X^\circ$ (which is $20$-dimensional as $X^\circ$ is a $K3$ surface); rather it gives a one-dimensional subspace which we denote by $\mcpx^{pol}(X^\circ) \subset \mcpx(X^\circ)$. 
It is not hard to see that $X^\circ_q \cong X^\circ_{\iii q}$, and in fact one has 
\[\mcpx^{pol}(X^\circ) \cong \left(\CP^1 \setminus \{q=0,q^4=1/4^4\}\right)/(\Z/4) \]
 (we remove the fibres over $q=0$ and $q^4=1/4^4$ because $X^\circ_q$ becomes singular there). 
This is a genus-zero curve with one cusp at $q=0$, one order-4 orbifold point at $q=\infty$, and a nodal point at $q^4=1/4^4$ (i.e., a point where $X^\circ_q$ develops a node).
\end{example}

\begin{example}
In contrast, deforming $q$ does give the full complex moduli space of the mirror quintic: we have $\mcpx(X^\circ) \cong \left(\CP^1 \setminus \{q=0,q^5=1/5^5\}\right)/(\Z/5)$, which is a genus-zero curve with one cusp at $q=0$, one order-$5$ orbifold point at $q=\infty$, and one nodal point at $q^5=1/5^5$ (also called the \emph{conifold point}).
\end{example}

The cusps in these examples are examples of \emph{large complex structure limits}: this means that they can be compactified by adding a singular variety which is `maximally degenerate' (in the sense that it is normal-crossings with a zero-dimensional stratum). 
In the first case, $\mathcal{M}_{1,1}$ can be compactified to $\overline{\mathcal{M}}_{1,1}$ by adding a nodal elliptic curve; in the second and third $\mcpx(X^\circ)$ can be partially compactified by adding the point $q=0$ corresponding to the singular variety $X^\circ_0 = \{\prod z_i = 0\}$. 

The K\"ahler moduli space is harder to define mathematically. 
More precisely, it is difficult to define the $A$-model invariants away from a certain part of the K\"ahler moduli space called the \emph{large volume limit} which is mirror to the large complex structure limit. 
It corresponds to complexified K\"ahler forms with $\omega \to +\infty$. 
We will see a precise definition of a formal neighbourhood of the large volume limit in \S \ref{sec:lvl}, together with an explanation of why it is hard to make mathematical sense of the $A$-model away from it.

\begin{example}
In examples \ref{eg:quartic} and \ref{eg:quintic}, the mirror map identifies the one-dimensional space of complex structures on $X^\circ$ parametrized by $q$ (in a formal neighbourhood of the large complex structure limit $q=0$)  with the one-dimensional space of complexified K\"ahler forms proportional to the Fubini--Study form (in a formal neighbourhood of the large volume limit). 
\end{example}

\begin{example}
\label{eg:mirr_quart_map}
It is more compelling to consider the `other direction' of mirror symmetry for the quartic: the $A$-model on the mirror quartic versus the $B$-model on the quartic. 
We have $22$ natural divisors on the mirror quartic, which we can naturally label by certain quartic monomials in four variables. 
Firstly, we have the four proper transforms of the images of the coordinate hyperplanes $\{z_i = 0\}$: we label these by the monomials $x_i^4$. 
Secondly, we have the eighteen exceptional divisors that were created when we resolved the $6$ $A_3$ singularities. 
The $A_3$ singularities lie on the pairwise intersections $\{z_i=z_j = 0\}$ of the coordinate hyperplanes, and it is natural to label the three exceptional divisors of the resolution by $x_i^3x_j$, $x_i^2x_j^2$, $x_i x_j^3$. 
Of course, divisors are Poincar\'e dual to classes in $H^{1,1}$, so we can deform the K\"ahler form on the mirror quartic in the direction of these $22$ divisors. 
We remark that there is some redundancy: the divisors only span a $19$-dimensional subspace of $H^{1,1}$. 
This corresponds, under the mirror map, to deforming the complex structure on the quartic by deforming the coefficients in front of the corresponding $22$ monomials of its defining equation. 
Once again there is some redundancy: up to isomorphism there is a $19$-dimensional space of quartics. 
\end{example}

Example \ref{eg:mirr_quart_map} is an example of the `monomial--divisor correspondence' \cite{Aspinwall1993b} which underlies the mirror map for Batyrev mirrors:

\begin{example}
\label{eg:Bat2}
Let's consider Batyrev's construction (Example \ref{eg:Batyrev}). 
The complex structure on the hyperplane section $X^\circ$ depends on the defining equation, which is a section of $\mathcal{L}^\circ$. 
The global sections of $\mathcal{L}^\circ$ have a basis indexed by lattice points of $\Delta^\circ$. 
The section corresponding to the origin cuts out $X^\circ_\infty$, the toric boundary of $V$, which is maximally degenerate so a large complex structure limit point.  
We can define nearby points in the complex moduli space by adding combinations of the sections corresponding to the remaining lattice points to the defining equation of the hypersurface. 
The remaining lattice points all lie on the boundary because the polytope is reflexive, thus we have parameters on $\mcpx(X^\circ)$ indexed by the boundary lattice points of $\Delta^\circ$.

On the other side of mirror symmetry, recall that the toric variety $V$ can be described either using the polytope $\Delta$, or using a fan $\Sigma$. 
This $\Sigma$ is the normal fan to the polytope, and it can be identified with the cone over the boundary of $\Delta^\circ$. 
The toric variety $V$ (and hence its hyperplane section $X$) will typically be singular, so we resolve it by subdividing the fan $\Sigma$. 
We do this by introducing rays spanned by boundary lattice points of $\Delta^\circ$: so the resolution has one divisor for each boundary lattice point. 
Of course, divisors define $(1,1)$ classes in cohomology, so we have parameters on $\mkah(X)$ indexed by the boundary lattice points of $\Delta^\circ$. 

To first order, the mirror map $\psi: \mkah(X) \to \mcpx(X^\circ)$ equates these two sets of parameters (more details can be found in \cite{Aspinwall1993b} or \cite[Section 6]{coxkatz}).
\end{example}

\subsection{Closed-string mirror symmetry: Hodge theory}

`Closed-string mirror symmetry' has to do with numerical invariants of Calabi--Yau K\"ahler manifolds. 
In the closed-string world, the $B$-model concerns periods (integrals of holomorphic forms over cycles), while the $A$-model concerns Gromov--Witten invariants. 
We recall that the Gromov--Witten invariants of $X$ count holomorphic maps of Riemann surfaces $u: \Sigma \to X$, possibly with some marked points and constraints, with the curve $u$ weighted by $\exp\left(2\pi \iii \cdot \omega^\C(u)\right)$. 
See Figure \ref{fig:GW}.\footnote{At first glance Gromov--Witten invariants look more like complex than symplectic invariants, since they are weighted counts of \emph{holomorphic} maps and $\omega^\C$ only enters via the weights; but it turns out they are invariant under deformations of the complex structure. 
They can even be defined using an \emph{almost-complex structure}, i.e., a not-necessarily-integrable complex structure $J: TX \to TX$, so long as $\omega(\cdot, J \cdot)$ is positive-definite.} 

\begin{figure}
\begin{center}
\hfill\includegraphics[scale=0.5]{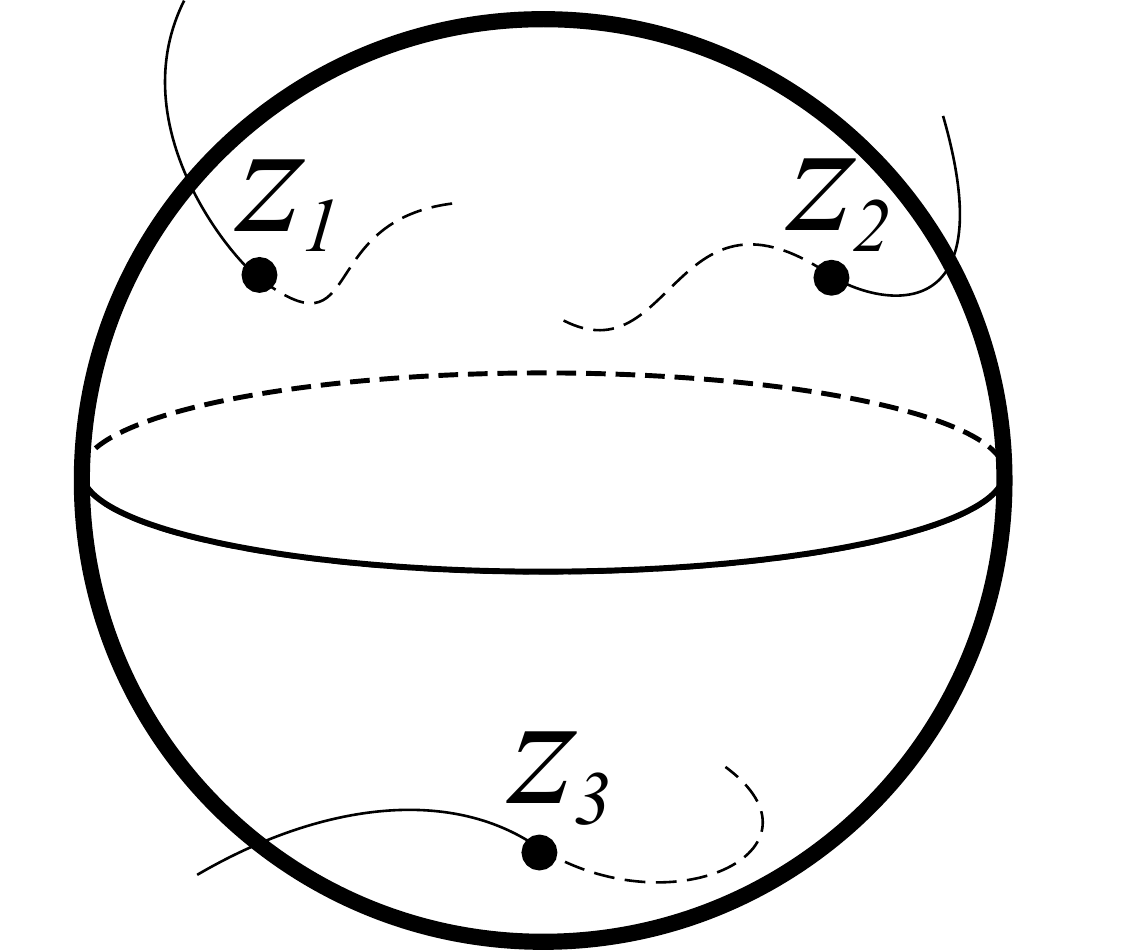}\hfill
\includegraphics[scale=0.5]{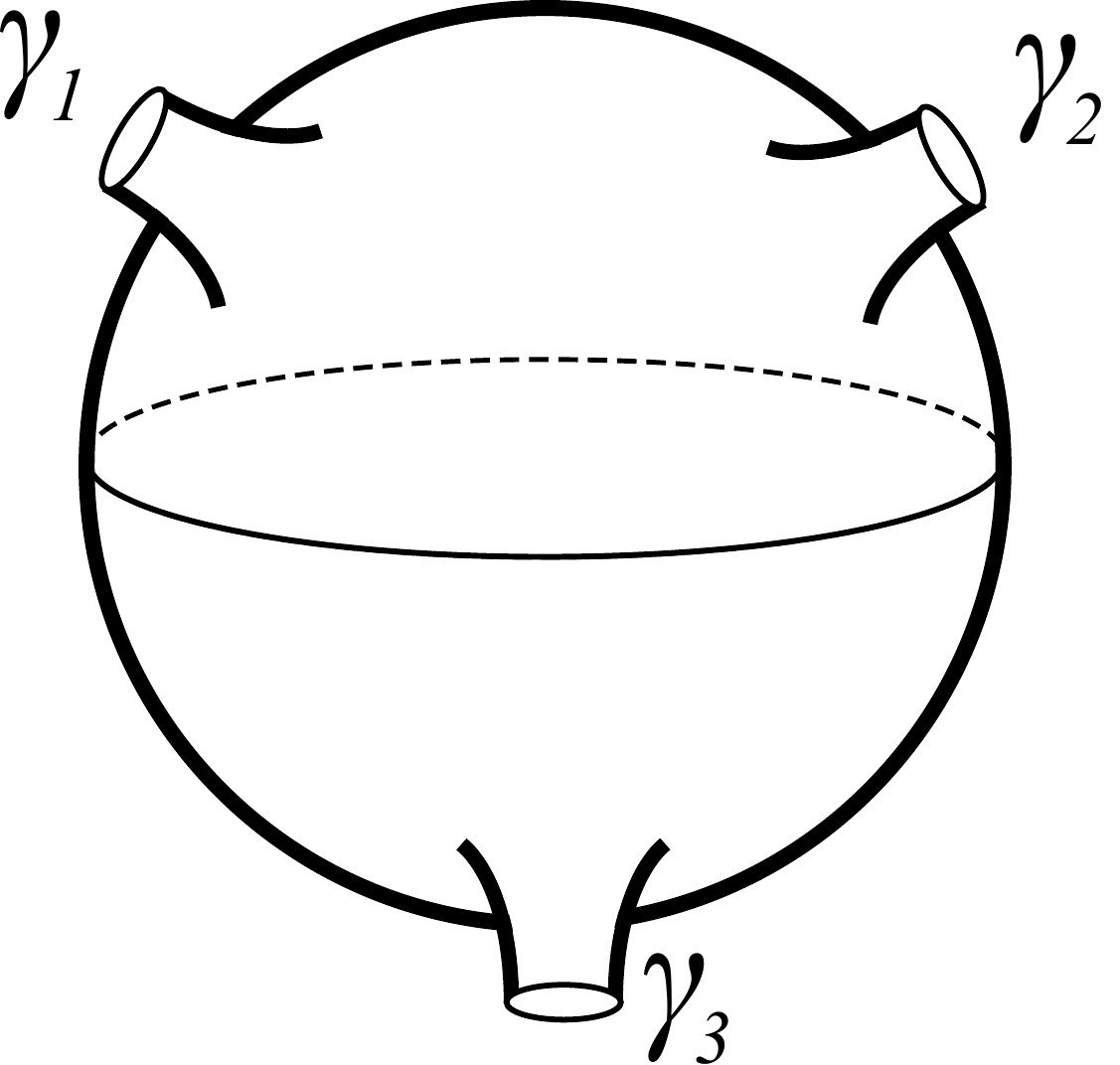}\hfill
\end{center}
\caption{\label{fig:GW}
Three-point Gromov--Witten invariants define a map $GW_{0,3}:H^*(X;\C)^{\otimes 3} \to \C$. Given inputs $\alpha_1,\alpha_2,\alpha_3 \in H^*(X;\Z)$, one chooses Poincar\'e dual cycles $A_1,A_2,A_3$ and distinct points $z_1,z_2,z_3 \in \CP^1$. 
When the degrees $|\alpha_i|$ add up to the dimension of $X$, the space of holomorphic maps $u: \CP^1 \to X$ such that $u(z_i) \in A_i$ for all $i$ is (after `perturbing to achieve transversality' \cite[\S 6.7]{mcduffsalamon}) a discrete but potentially infinite set.
We assign a weight $\exp(2\pi \iii \cdot \omega^\C(u))$ to each point $u$ in this set, and define $GW_{0,3}(\alpha_1,\alpha_2,\alpha_3)$ to be the sum of these weights. 
If the degrees don't add up to the dimension of $X$, such holomorphic maps may come in a higher-dimensional family; in this case we define $GW_{0,3}$ to be $0$. 
This is called a \emph{closed-string} invariant because it can equivalently be interpreted as a sum of maps $u$ as on the right, where the marked points are replaced by punctures, along which the map is asymptotic to `closed strings' $\gamma_i$ (as opposed to `open strings' which have boundary on `branes', see Figure \ref{fig:fuk}). 
Compare \S \ref{subsec:HF}.}
\end{figure}

Thus, if $(X,X^\circ)$ is a mirror pair then closed-string mirror symmetry relates the genus-zero Gromov--Witten invariants of $X$ to the periods of $X^\circ$. 
Periods are relatively easy to compute, whereas Gromov--Witten are relatively hard to compute. 
For this reason one often says that closed-string mirror symmetry yields predictions for the genus-zero Gromov--Witten invariants of $X$ in terms of the periods of $X^\circ$.

\begin{example}
Candelas, de la Ossa, Green and Parkes computed the mirror predictions for the genus-zero Gromov--Witten invariants of the quintic threefold $X$ \cite{Candelas1991}. 
Their predictions were verified by Givental \cite{Givental1996} and Lian--Liu--Yau \cite{Lian1997}.  
\end{example}

\begin{example}
\label{eg:cy2boring}
Closed-string mirror symmetry also works for Examples \ref{eg:ell_curve} and \ref{eg:quartic}, but is less interesting: it predicts that the genus-zero Gromov--Witten invariants of elliptic curves and quartic surfaces should be equal to zero, and indeed this is true. 
\end{example}

Although the most concrete formulation of closed-string mirror symmetry relates numerical invariants on the two sides of the mirror, the explicit relationship is rather complicated. 
Morrison showed how it can be formulated more conceptually by assembling these numerical invariants into algebraic structures: namely, \emph{variations of Hodge structures} \cite{Morrison1993} (see also \cite{Voisin1996,coxkatz}). 
For us, a variation of Hodge structures over $\mathcal{M}$, written $V \to \mathcal{M}$, consists of a vector bundle $\mathcal{E} \to \mathcal{M}$, equipped with a filtration $F^{\ge \bullet} \mathcal{E}$ and a flat connection $\nabla$ satisfying Griffiths transversality: $\nabla F^{\ge \bullet} \subset F^{\ge \bullet -1}$. 
Our Hodge structures also come with a \emph{polarization}, which is a covariantly constant pairing that respects the filtration in a certain way, but we'll omit it for simplicity.

The closed-string $B$-model is the classical variation of Hodge structures $V^B(X^\circ) \to \mcpx(X^\circ)$. 
The vector bundle has fibre $\mathcal{E}_p := H^*(X^\circ_p;\C)$, the filtration $F^{\ge \bullet}$ is the Hodge filtration, and the connection is the Gauss--Manin connection. 

The closed-string $A$-model is the so-called `$A$-variation of Hodge structures' $V^A(X) \to \mkah(X)$. 
The vector bundle has fibre $\mathcal{E}_p := H^*(X;\C)$ (it is trivial), the filtration is  $F^{\ge \bullet} \mathcal{E} := H^{-* \ge \bullet}(X;\C)$, and the connection is the \emph{Dubrovin} or \emph{Givental} connection: the connection matrix 
\[ \Gamma: H^{1,1}(X;\C) \to \End(H^*(X;\C))\]
can be identified with the restriction of the three-point Gromov--Witten invariant $GW_{0,3}$ (see caption to Figure \ref{fig:GW}) to $H^{1,1}(X;\C) \otimes H^*(X;\C)^{\otimes 2} \subset H^*(X;\C)^{\otimes 3}$, via the identification of $H^*(X;\C)$ with its dual arising from Poincar\'e duality.

Thus, closed-string mirror symmetry says that there is an isomorphism of variations of Hodge structures $V^A(X) \xrightarrow{\sim} V^B(X^\circ)$, covering the mirror map $ \mkah(X) \xrightarrow{\sim} \mcpx(X^\circ)$. 
We will only consider this isomorphism near the large volume/large complex structure limit points, due to the previously-mentioned difficulty with defining the Gromov--Witten invariants away from the large volume limit point. 
The monodromies of the connections around these points have a certain property called \emph{maximal unipotence}. 

There is a procedure for extracting numerical invariants from a variation of Hodge structures over a neighbourhood of a maximally unipotent limit point, which outputs Gromov--Witten invariants when applied to $V^A$ and periods when applied to $V^B$. 
In particular, closed-string mirror symmetry in the form of an isomorphism of variations of Hodge structures $V^A(X) \cong V^B(X^\circ)$ implies the relationship between Gromov--Witten invariants of $X$ and periods of $X^\circ$ referenced above.

\begin{rmk}
We have focused on closed-string mirror symmetry `at genus zero' (on the $A$-side, this means we only considered Gromov--Witten invariants counting maps $u: \Sigma \to X$ where $\Sigma$ is a curve of genus zero). 
There is a generalization to `higher-genus closed-string mirror symmetry' which relates higher-genus Gromov--Witten invariants to `BCOV theory' \cite{Bershadsky1994}. 
This has been used to predict higher-genus Gromov--Witten invariants, for example up to genus 51 on the quintic \cite{Huang2009}, and these predictions have been verified in genus one \cite{Zinger2009} and two \cite{Guo2017}.
\end{rmk}

\subsection{Open-string (a.k.a. homological) mirror symmetry: categories}

Homological mirror symmetry (HMS) was proposed by Kontsevich \cite{Kontsevich1994}. 
In the open-string world, rather than variations of Hodge structures, the relevant invariants are $A_\infty$ categories (we define these in \S \ref{subsec:ainfcat}). 
The $B$-model is denoted $\Dcoh(X^\circ)$: it is `a differential graded enhancement of the bounded derived category of coherent sheaves on $X^\circ$'. 
The $A$-model is denoted $\Dfuk\left(X,\omega^\C\right)$: it is `the split-closed bounded derived category of the Fukaya category of $\left(X,\omega^\C\right)$' (we will summarize the definition in \S \ref{subsec:fuk}). 
Thus, HMS predicts a (quasi-)equivalence of $A_\infty$ categories
\[ \Dfuk\left(X,\omega^\C_p\right) \simeq \Dcoh\left(X^\circ_{\psi(p)}\right)\]
where $\psi: \mkah(X) \xrightarrow{\sim} \mcpx(X^\circ)$ is the mirror map. 

\begin{rmk}
\label{rmk:hmsapps}
We have seen that closed-string mirror symmetry predicts interesting enumerative invariants which have been proved in many cases, but are there similar reasons to have `faith' in HMS? 
To start with it has been established in a (relatively small) number of examples: \cite{Abouzaid2017} builds on \cite{Polishchuk1998,Kontsevich2001,Fukaya2002a,Abouzaid2010d} to prove HMS for nonsingular SYZ torus fibrations; \cite{Seidel2003,Sheridan2015} prove it for hypersurfaces in projective space; \cite{Sheridan2017a} proves it for `generalized Greene--Plesser mirrors'; and generalizations to many non-compact and non-Calabi--Yau examples have also been proved. 
In cases where it has been established, it has been used to prove results about the structure of symplectic mapping class groups \cite{Sheridan2017b} and classification of Lagrangian submanifolds \cite{Abouzaid2010d}, by translating them into tractable questions in the world of algebraic geometry. 

Perhaps more importantly, it has turned out to be extremely powerful as a motivating principle (compare Remark \ref{rmk:wiigf}). 
Motivation from HMS has led to fundamental advances in algebraic geometry, such as the construction of braid group actions on derived categories of coherent sheaves \cite{Thomas2001}, stability conditions on triangulated categories \cite{Bridgeland2008,Bridgeland2006b}, and knot invariants \cite{Cautis2008}; and in symplectic geometry, such as results on symplectic mapping class groups \cite{Seidel2008d}, and the development of quilts \cite{Mau2016}; furthermore, it has been observed that it is intimately related with the geometric Langlands program \cite{Hausel2003}. 
This is far from an exhaustive list. 
\end{rmk}

Besides its power as a motivating principle, Kontsevich emphasized the explanatory power of HMS \cite{Kontsevich1994}:

\begin{quote}
``Our conjecture, if it is true, will unveil the mystery of Mirror Symmetry. The numerical predictions mean that two elements of an uncountable set (formal power series with integral coefficients) coincide. Our homological conjecture is equivalent to the coincidence in a countable set (connected components of the `moduli space of triangulated categories', whatever it means).''
\end{quote}

In other words, it is `less of a coincidence' that HMS should be true, than that closed-string mirror symmetry should be true; since HMS implies closed-string mirror symmetry (at least at genus zero, as we will see in \S \ref{subsec:hmsimplies}), this goes some way towards `explaining' closed-string mirror symmetry.
One of our main aims in this survey is to expand on this idea, and explain how it leads to an efficient approach to proving (both versions of) mirror symmetry.

\subsection{Using versality}
\label{subsec:vers}

Let $X^\circ$ be a Calabi--Yau K\"ahler manifold. 
The Bogomolov--Tian--Todorov theorem says that $X^\circ$ admits a local universal deformation $\EuX^\circ \to \mcpx$ with $\mcpx$ smooth, $T_0 \mcpx \cong H^1(X^\circ,TX^\circ)$.  
This means that any local family of deformations of $X^\circ$ parametrized by a base $B$ is pulled back via a classifying map $B \to \mcpx$, which is unique. 

More straightforwardly, if $X$ is Calabi--Yau K\"ahler manifold with complexified K\"ahler class $\left[\omega^\C\right]$, then it is obvious that we have a universal family of complexified K\"ahler classes parametrized by a neighbourhood of $\left[\omega^\C\right] \in H^{1,1}(X;\C)$.

\begin{strat}
\label{strat:vers}
Suppose that we had proved a version of mirror symmetry (closed- or open-string) at one point in moduli space:
\[ A(X,\omega^\C_p) \cong B(X^\circ_q) \quad \text{ for some $p \in \mkah(X)$, $q \in \mcpx(X^\circ)$},\]
and furthermore that the corresponding deformations of the invariants (Hodge structures in the closed-string case, categories in the open-string case) parametrized by the moduli spaces were universal in a neighbourhood of the points $p$ and $q$.
Then it would follow by universality that there existed a mirror map $\psi$ identifying a neighbourhood of $p$ with a neighbourhood of $q = \psi(p)$, and an isomorphism 
\[ A(X,\omega^\C_{p'}) \cong B(X^\circ_{\psi(p')}).\]
So mirror symmetry would be proved for all $p'$ in a neighbourhood of $p$.
\end{strat}

A fundamental difference between closed-string and homological mirror symmetry is that, whereas the variations of Hodge structure we consider are usually not universal, the categories are. 
So Strategy \ref{strat:vers} can work to prove homological mirror symmetry, but not to prove closed-string mirror symmetry. 
 
Indeed, variations of Hodge structures $V \to \cM$ can be classified by a period map $\cM \to \mathcal{P}$ where $\mathcal{P}$ is a `period domain' \cite{Griffiths1968}. 
Griffiths transversality says that the differential of the period map lies in the \emph{horizontal distribution}. 
If the differential mapped $T\cM$ isomorphically to the horizontal distribution, we would have a universal variation of Hodge structures: the image of the period map would be the unique maximal integral submanifold for the horizontal distribution. 
This happens for Calabi--Yaus of dimension one and two, if we take the polarizations on our Hodge structures into account in the latter case, but it cannot happen for a Calabi--Yau of dimension $\ge 3$: the dimension of the horizontal distribution is always strictly greater than that of $\mcpx$ (see \cite[Remark 10.20]{Voisin2002a}).
 
To clarify the situation, let us choose local coordinates in $\mathcal{P}$ so that the image of the period map can be written as the graph of a function. 
The horizontal distribution determines a differential equation satisfied by this function, which \emph{under-constrains} it because the dimension of the submanifold is less than that of the distribution. 
Thus, if we were trying to prove closed-string mirror symmetry by identifying the images of the period maps classifying $V^A$ and $V^B$, and had identified the corresponding functions to order one million, they might still differ at higher order. 
This is why Kontsevich says that closed-string mirror symmetry means that ``two elements of an uncountable set coincide''. 

In fact the procedure for extracting numerical invariants from a variation of Hodge structures consists, roughly, in choosing these local coordinates in a standard way and then taking the Taylor coefficients of the corresponding function. 
That is why closed-string mirror symmetry doesn't give interesting numerical information for Calabi--Yaus of dimension $\le 2$ (Example \ref{eg:cy2boring}): there's `no room' for interesting Taylor coefficients in this situation.

The situation for HMS, on the other hand, is different: as we will explain in \S \ref{sec:versint}, in many situations the family of derived categories $\Dcoh(X^\circ_p)$ parametrized by $p \in \mcpx(X^\circ)$, or of Fukaya categories $\Dfuk(X,\omega^\C_p)$ parametrized by $p \in \mkah(X)$, is universal (more precisely, versal). 
Thus, Strategy \ref{strat:vers} \emph{can} work to prove HMS. 
Furthermore, we will explain in \S \ref{subsec:hmsimplies} that HMS implies closed-string mirror symmetry in a certain sense: so from the versal invariant (the category), we can extract the non-versal invariant (the variation of Hodge structures), from which the enumerative invariants can be extracted as the Taylor coefficients `measuring its failure to be versal'.

\subsection{Versality at the large volume limit}

There are two problems with Strategy \ref{strat:vers} for proving HMS, as stated. 
First, as we have already mentioned, it is difficult to make sense of the $A$-model (in this case, the Fukaya category) away from the large volume limit. 
We will see in \S \ref{sec:lvl} that in fact, we can only define the $A$-model in a \emph{formal neighbourhood} of the large volume limit -- so there are no interior points $p \in \mkah(X)$ where the program could get started. 
Second, even if we could make sense of the Fukaya category at an interior point of the moduli space, it's not clear that establishing HMS at that particular point would be any easier than establishing it at every point.

The solution to both of these is to employ Strategy \ref{strat:vers} \emph{at the large volume limit point}. 
In other words we construct compactifications $\mkah \subset \mbarkah$ including the large volume limit point $p$, and $\mcpx \subset \mbarcpx$ including the large complex structure limit point $q$. 
We then start by proving HMS at these limit points, then try to extend to the formal neighbourhood by versality. 
We will see, however, that the versal deformation space is typically \emph{not} smooth at these boundary points, which is an obstruction to applying versality. 
However, in \S \ref{sec:versbound} we explain an ad-hoc way to fix this by modifying the deformation problem, which we expect to work in many cases (e.g., many Calabi--Yau hypersurfaces in toric varieties). 

Thus, in certain circumstances, we reduce the problem of proving HMS to the problem of proving it in the large volume/large complex structure limit. 
We will explain what this means more explicitly in \S \ref{sec:provehms}. 

\begin{rmk}
\label{rmk:enumless}
Let us explain why this makes HMS `easier to prove'.  
One reason it's hard to prove mirror symmetry is that the $A$-model is hard to compute: it involves enumerating holomorphic maps from Riemann surfaces into $X$, and there's no systematic way to do so. 
The original proofs of closed-string mirror symmetry for complete intersections in toric varieties employed equivariant localization (with respect to the algebraic torus action on the ambient toric variety) to do this enumeration. 
One of the nice things about the versality strategy we will outline is that it reduces the number of enumerations we need to a finite number: rather than computing infinitely many Taylor coefficients separately, we only need the zeroth- and first-order Taylor coefficients to apply a versality theorem.
\end{rmk}

We conclude the paper by outlining, in \S \ref{sec:skel}, an approach to proving HMS in the large volume/large complex structure limit which was suggested by Kontsevich and has recently been developed by Nadler. 
The idea is to reduce computation of the Fukaya category in the large volume limit to a sheaf-theoretic computation, which can be phrased in terms of microlocal sheaf theory on the `Lagrangian skeleton'. 
Unfortunately it's not immediately clear how to input this to the versality results that we describe, but we speculate on what might be needed to make this connection. 

\begin{rmk}
This paper is longer on ideas than precise statements of theorems, by design. 
Our main aim is to paint an overall picture, which would be obscured by extra notation and hypotheses if we tried to make precise statements, so we have mostly omitted them and instead given references to the literature where they can be found. 
It's worth emphasizing that many of the basic objects that we discuss have only been constructed under certain hypotheses (and/or are work in preparation). 
We do not enumerate these hypotheses, but rather talk as if everything has been constructed in full generality; the reader who wants to prove theorems about these objects will have to consult the references to see the degree of generality in which they have actually been constructed. 
\end{rmk}

\paragraph{Acknowledgments:} I am supported by a Royal Society University Research Fellowship. I am very grateful to my collaborators Strom Borman, Sheel Ganatra, Tim Perutz and Ivan Smith, from whom I've learned a lot about this material, and in particular to Sheel Ganatra who helped me with \S \ref{sec:skel} (mistakes are my own of course). I'm also very grateful to Helge Ruddat, who pointed out the connection with versal deformation spaces of $d$-semistable $K3$ surfaces \cite{Friedman1983}; and to Siu-Cheong Lau, who suggested that anti-symplectic involutions could be used to constrain deformations of Fukaya categories (which is what \S \ref{subsec:cut} is about); and to John Lesieutre, for a conversation about ample cones. Finally, my work on this subject wouldn't have gotten started if Paul Seidel hadn't explained to me how to define the coefficient ring $R_{X,D}$ of the relative Fukaya category, and many other things.


\section{Deformation theory via $L_\infty$ algebras}
\label{sec:def}

We recall that many natural deformation problems can be phrased in terms of differential graded Lie algebras \cite{Manetti2005a}, or more generally $L_\infty$ algebras \cite{Kontsevich2003,Getzler2009,Kontsevichh}. 
This framework is fundamental for our study of deformation spaces of the categories involved in HMS, so in this section we summarize the important background. 
We work over a field $\Bbbk$ of characteristic zero throughout.

\subsection{$L_\infty$ algebras}
\label{subsec:linf}

Recall that an $L_\infty$ algebra $\fg$ is a graded vector space equipped with multilinear maps
\begin{align*}
\ell^s:\fg^{\otimes s} & \to \fg \quad \text{for all $s \ge 1$, denoted}\\
v_1 \otimes \ldots \otimes v_s & \mapsto \{v_1,\ldots,v_s\}.
\end{align*}
These maps should have degree $2-s$, they should be graded symmetric with respect to reduced degrees (which means that swapping two adjacent inputs $v$ and $w$ changes the sign by $(-1)^{(|v|+1)(|w|+1)}$), and they should satisfy the $L_\infty$ relations:
\[ \sum_{j,\sigma} (-1)^\dagger \{\{v_{\sigma(1)},\ldots,v_{\sigma(j)}\},v_{\sigma(j+1)},\ldots,v_{\sigma(s)}\},\]
where the sum is over all $j$ and all `unshuffles' $\sigma$: i.e., permutations $\sigma$ satisfying $\sigma(1)<\ldots<\sigma(j)$ and $\sigma(j+1)<\ldots<\sigma(s)$. 
The sign $\dagger$ is the Koszul sign associated to commuting the inputs $v_i$ through each other, equipped with their reduced degrees $|v_i|+1$ as before. 
If $\ell^{\ge 3} = 0$, then $\fg$ is called a \emph{differential graded Lie algebra}. 

The first of the $L_\infty$ relations says that $\ell^1(\ell^1(v)) = 0$, so $\ell^1$ is a differential: we denote its cohomology by $H^*(\fg)$. 
The second says that $\ell^2$ satisfies the Leibniz rule, so defines a bracket on $H^*(\fg)$. 
The third says that $\ell^2$ satisfies the Jacobi relation up to a homotopy given by $\ell^3$, and in particular the bracket on $H^*(\fg)$ satisfies the Jacobi relation.

There is a notion of $L_\infty$ morphism $f: \fg \dashrightarrow \fh$, which consists of multilinear maps $f^s: \fg^{\otimes s} \to \fh$ for $s \ge 1$ satisfying certain axioms \cite{Kontsevich2003}. 
The first of these says that $f^1: (\fg,\ell^1) \to (\fh,\ell^1)$ is a chain map: if it induces an isomorphism on cohomology, then $f$ is called an $L_\infty$ quasi-isomorphism and we say that $\fg$ and $\fh$ are quasi-isomorphic. 
This is an equivalence relation: $L_\infty$ quasi-isomorphisms can be composed and inverted \cite[Theorem 4.6]{Kontsevich2003}.

An $L_\infty$ algebra $\fh$ is called \emph{minimal} if $\ell^1 = 0$.  
Any $L_\infty$ algebra admits a \emph{minimal model}, i.e., a quasi-isomorphic $L_\infty$ algebra which is minimal \cite[Lemma 4.9]{Kontsevich2003}. 

\subsection{Maurer--Cartan elements}

Given an $L_\infty$ algebra $\fg$, we would like to define the set of solutions to the Maurer--Cartan equation:
\[ MC_\fg := \left\{\alpha \in \fg^1: \sum_{s\ge 1} \frac{1}{s!}\{\underbrace{\alpha,\ldots,\alpha}_s\} = 0\right\}, \]
but the infinite sum may not converge, so we need to be a bit careful.
What we \emph{can} define is $MC_\fg(R)$, where $(R,\fm)$ is a complete Noetherian local $\Bbbk$-algebra. 
It is the set of $\alpha \in \fg^1 \hat{\otimes} \fm$ satisfying the Maurer--Cartan equation, where the hat on the tensor product means we complete with respect to the $\fm$-adic filtration after tensoring. 
The Maurer--Cartan equation then converges by completeness. 
 
For any $\gamma(t)$ in the $\fm$-adic completion of $\fg^0 \otimes \fm[t]$, there is a corresponding $t$-dependent vector field $v(\gamma,t)$ on $\fg^1 \hat{\otimes} \fm$, defined by
\[v(\gamma,t)_\alpha := \sum_{i\ge 0} \frac{1}{i!}\{\gamma(t),\underbrace{\alpha,\ldots,\alpha}_i\}.\]
One can verify that this vector field is tangent to $MC_\fg(R)$, and that its flow exists for all times. 
We say that two solutions of the Maurer--Cartan equation are \emph{gauge equivalent} if they are connected by a finite series of flowlines of such vector fields: we denote this equivalence relation by $\sim$. 
The following theorem is fundamental in the study of deformation problems via $L_\infty$ algebras (see \cite[Theorem 2.4]{Goldman1988}, \cite[Theorem 4.6]{Kontsevich2003}, \cite[Theorem 2.2.2]{Fukaya2003}):

\begin{thm}
\label{thm:mcqi}
An $L_\infty$ quasi-isomorphism $f: \fg \dashrightarrow \fh$ induces a bijection 
\[ MC_\fg(R)/\!\! \sim \,\,\,\longrightarrow\, MC_\fh(R)/\!\! \sim.\]
\end{thm}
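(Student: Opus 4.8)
The plan is to construct the map explicitly and then prove it is a bijection by an induction on the $\fm$-adic filtration, the induction being fed by the single input that $f^1$ is a quasi-isomorphism. An $L_\infty$ morphism $f = (f^s)_{s \ge 1}$ has a standard pushforward on Maurer--Cartan elements,
\[ f_*(\alpha) := \sum_{s \ge 1} \frac{1}{s!} f^s(\underbrace{\alpha,\ldots,\alpha}_s) \in \fh^1 \hat{\otimes} \fm, \]
the sum converging by $\fm$-adic completeness since $\alpha \in \fg^1 \hat{\otimes} \fm$ has positive filtration. The defining relations of an $L_\infty$ morphism are exactly what is needed to verify that $f_*$ carries $MC_\fg(R)$ into $MC_\fh(R)$: substituting $\alpha$ repeatedly into the morphism relations shows that the Maurer--Cartan expression evaluated at $f_*(\alpha)$ equals $f$ applied to the Maurer--Cartan expression at $\alpha$ (up to combinatorial factors), hence vanishes whenever $\alpha$ is a solution. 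This is a formal, if bookkeeping-heavy, calculation.

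Next I would check that $f_*$ descends to the gauge quotient. It suffices to show that $f_*$ carries a gauge flowline to a gauge flowline: differentiating the morphism relations along the vector field $v(\gamma,t)$ shows that $\tfrac{d}{dt} f_*(\alpha(t))$ equals the gauge vector field on $\fh$ generated by a suitable $t$-dependent element $\gamma'(t) \in \fh^0 \hat{\otimes} \fm$ built from $f$ and $\gamma$. Hence gauge-equivalent elements have gauge-equivalent images, and $f_*$ is well defined on $MC_\fg(R)/\!\sim$. Conceptually this, together with the composition formula $(g\circ f)_* = g_* \circ f_*$ and $\id_* = \id$, already reduces the theorem to the cited fact that quasi-isomorphisms can be inverted --- provided one also knows that homotopic morphisms act identically on the gauge quotient; I will instead argue bijectivity directly, which avoids developing that homotopy theory.

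To prove bijectivity I would induct on the $\fm$-adic filtration. Because $R$ is complete and Noetherian, $MC_\fg(R) = \varprojlim_n MC_\fg(R/\fm^{n+1})$ compatibly with $\sim$ and with $f_*$, so it is enough to show $f_*$ is bijective modulo $\fm^{n+1}$ for every $n$. Modulo $\fm^2$ all brackets of order $\ge 2$ and all gauge corrections vanish, so $MC_\fg(R)/\!\sim$ reduces to $H^1(\fg)\otimes(\fm/\fm^2)$, and the claim is precisely that $f^1$ is a quasi-isomorphism. For the inductive step I would compare the two lifting problems $MC(R/\fm^{n+2}) \to MC(R/\fm^{n+1})$ on each side: the obstruction to lifting a class lives in $H^2 \otimes (\fm^{n+1}/\fm^{n+2})$, and the set of lifts, when nonempty, is a torsor under $H^1 \otimes (\fm^{n+1}/\fm^{n+2})$. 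Since $f^1$ induces isomorphisms on $H^1$ and $H^2$ and is compatible with these obstruction and torsor structures, a five-lemma-style comparison upgrades bijectivity modulo $\fm^{n+1}$ to bijectivity modulo $\fm^{n+2}$.

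I expect the inductive step --- and specifically the control of gauge equivalence through it --- to be the main obstacle. Verifying that $f_*$ respects the obstruction classes in $H^2$ and the $H^1$-torsor structure of lifts order by order, and that injectivity is preserved (so that gauge-inequivalent classes are not accidentally identified upon passing to the next order), is where the $L_\infty$ morphism relations must be used in full strength rather than only through $f^1$. In particular the higher components $f^{\ge 2}$ enter precisely in reconciling the gauge actions on the two sides, and keeping track of them is the genuinely delicate part of the argument.
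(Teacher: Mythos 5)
The paper does not prove this theorem; it states it as a known result and cites Goldman--Millson, Kontsevich, and Fukaya for the proof, so there is no internal argument to compare against. Your proposal reconstructs the Goldman--Millson-style proof: push forward Maurer--Cartan elements by $f_*(\alpha)=\sum_s \frac{1}{s!}f^s(\alpha,\ldots,\alpha)$, check compatibility with gauge flows, and run an obstruction-theoretic induction up the $\fm$-adic filtration. This is a legitimate route, and your identification of the base case with the statement that $f^1$ is a quasi-isomorphism on $H^1$ is correct. It is worth knowing that Kontsevich's cited argument is structurally different and arguably cleaner for $L_\infty$ algebras: one decomposes any $L_\infty$ algebra as a direct sum of a minimal one and a linear contractible one, observes that the contractible summand contributes nothing to $MC/\!\sim$, and notes that a quasi-isomorphism of minimal $L_\infty$ algebras is an honest isomorphism, for which the statement is immediate. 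That route trades your order-by-order bookkeeping for the homotopy transfer theorem.

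The one genuine gap in your sketch is the claim that the set of lifts from $MC(R/\fm^{n+1})$ to $MC(R/\fm^{n+2})$, taken modulo gauge, is a \emph{torsor} under $H^1(\fg)\otimes(\fm^{n+1}/\fm^{n+2})$. At the level of actual Maurer--Cartan elements the lifts of a fixed $\bar\alpha$ form a torsor under cocycles $Z^1\otimes I$; after passing to gauge equivalence the induced action of $H^1\otimes I$ is transitive but in general \emph{not free}, because gauge transformations congruent to the identity modulo $\fm^{n+1}$ can act nontrivially on the set of lifts, and which ones do is controlled by $H^0$ and by the gauge group one level down. A five-lemma comparison run purely on the sets $MC/\!\sim$ therefore does not close: injectivity at order $n+2$ requires knowing not just that two lifts are gauge equivalent but that the gauge transformation relating them lifts a prescribed one at order $n+1$. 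The standard fix is to upgrade the induction from sets to groupoids (the Deligne groupoid, with objects the Maurer--Cartan elements and morphisms the gauge transformations) and prove an equivalence of groupoids at each stage, which brings $H^0(\fg)\to H^0(\fh)$ into the argument; since $f$ is a quasi-isomorphism in all degrees this works, but it is an additional layer of structure your proposal would need to make explicit. Your separate claim that $f_*$ carries gauge flowlines to gauge flowlines is true but not trivial to verify directly from the flow definition when $f^{\ge 2}\neq 0$; it is usually established via the equivalent path-object formulation of homotopy of Maurer--Cartan elements, which is manifestly functorial under $L_\infty$ morphisms.
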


\subsection{Versal deformation space}

\begin{defn}
We say that $\alpha \in MC_\fg(S)$ is
\begin{itemize}
\item \emph{Complete} if any $\beta \in MC_\fg(R)$ is gauge-equivalent to $\psi^* \alpha$ for some $\psi^*:S \to R$;
\item \emph{Universal} if furthermore this $\psi^*$ is uniquely determined;
\item \emph{Versal} if instead $\psi^*: \fm_S/\fm^2_S \to \fm_R/\fm_R^2$ is uniquely determined.
\end{itemize}
\end{defn}

We start by studying versal deformation spaces of a \emph{minimal} $L_\infty$ algebra $\fg$, which we will assume to be finite-dimensional in each degree for simplicity (this will be the case in all of our applications in this paper).  
Choose a basis $\vec{e}_1,\ldots,\vec{e}_a$ for $\fg^1$ and $\vec{f}_1,\ldots,\vec{f}_b$ for $\fg^2$.  
Define $P_j \in \Bbbk\power{x_1,\ldots,x_a}$ by
\begin{align} 
\label{eqn:Pj} \sum_{s \ge 1} \frac{1}{s!}\{\underbrace{\alpha_v,\ldots,\alpha_v}_s \}& = \sum_{j=1}^b P_j(x_1,\ldots,x_a) \cdot \vec{f}_j, \quad \text{ where} \\
\label{eqn:alphavers} \alpha_v &:= \sum_{i=1}^a x_i \cdot \vec{e}_i.
\end{align}

\begin{defn}
We define the complete Noetherian local $\Bbbk$-algebra
\begin{align*}
R_v & := \Bbbk\power{x_1,\ldots,x_a}/(P_1,\ldots,P_b).
\end{align*}
\end{defn}

There is a tautological Maurer--Cartan element $\alpha_v \in MC_\fg(R_v)$, defined by \eqref{eqn:alphavers}, and any element of $MC_\fg(R)$ is equal to $\psi^* \alpha_v$ for a unique $\psi^*:R_v \to R$. 
It is immediate that $\alpha_v$ is complete, and it is also versal; however it need not be universal, because there may be a gauge equivalence $ \alpha_v \sim \psi^* \alpha_v$ for some $\psi^*:R_v \to R_v$ which induces the identity map on $\fm/\fm^2$ but differs from the identity at higher order. 
Thus it makes sense to define the \emph{versal deformation space} of $\fg$ to be 
\[ MC_\fg := \spec(R_v).\]

If $\fg$ is an arbitrary $L_\infty$ algebra with finite-dimensional cohomology in each degree, then we can construct a minimal model $\fh$ for $\fg$. 
We can then construct a versal Maurer--Cartan element for $\fh$ via the above construction, which yields one for $\fg$ by Theorem \ref{thm:mcqi}.

\begin{rmk}
We say that $\fg$ is \emph{homotopy abelian} if it is quasi-isomorphic to an $L_\infty$ algebra $\fh$ which has all operations $\ell^s$ equal to $0$. 
In this case the versal deformation space is very simple: the $P_j$ in \eqref{eqn:Pj} vanish, so the versal deformation space is a formal neighbourhood of $0 \in H^1(\fg)$.
\end{rmk}

\subsection{Versality criterion}

Any Maurer--Cartan element $\alpha \in MC_\fg(R)$ determines a \emph{Kodaira--Spencer map}
\[ KS_\alpha: \left(\fm/\fm^2\right)^* \to H^1(\fg).\]
Explicitly, the projection of $\alpha$ to $\fg^1 \otimes \fm/\fm^2$ is $\ell^1$-closed by the Maurer--Cartan equation, so defines an element in $H^1(\fg) \otimes \fm/\fm^2$. 
The Kodaira--Spencer map is defined to be contraction with this element. 

\begin{thm}
\label{thm:vers}
Let $R = \Bbbk\power{\nov_1,\ldots,\nov_k}$ be a formal power series algebra, and $\beta \in MC_\fg(R)$. 
If $KS_\beta$ is an isomorphism (respectively, surjective) then $\beta$ is versal (respectively, complete).
\end{thm}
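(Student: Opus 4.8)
The plan is to reduce the general case to the minimal case, where the versal deformation space has been constructed explicitly, and then to show that the Kodaira--Spencer hypothesis forces the classifying map $\psi^*: R_v \to R$ to be surjective (for completeness) or an isomorphism on cotangent spaces (for versality). First I would pass to a minimal model. By Lemma 4.9 of \cite{Kontsevich2003} there is an $L_\infty$ quasi-isomorphism $f: \fg \dashrightarrow \fh$ with $\fh$ minimal, and by Theorem \ref{thm:mcqi} this induces a bijection $MC_\fg(R)/\!\!\sim \,\to\, MC_\fh(R)/\!\!\sim$ for every complete Noetherian local $\Bbbk$-algebra $R$. Since $f^1$ induces an isomorphism on cohomology, the Kodaira--Spencer map is unchanged (up to this fixed isomorphism $H^1(\fg) \cong H^1(\fh)$) when we replace $\beta$ by the corresponding Maurer--Cartan element $f_*\beta \in MC_\fh(R)$: the relevant class is the image of $\beta$ in $H^1 \otimes \fm/\fm^2$, and $f^1$ computes precisely this image. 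Because completeness and versality are defined purely in terms of the functor $R' \mapsto MC(R')/\!\!\sim$ and the classifying maps $\psi^*$, they transport across the bijection of Theorem \ref{thm:mcqi}. So it suffices to prove the theorem for $\fh$ minimal.

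\textbf{The minimal case.} Now $\fh$ is minimal, so $\ell^1 = 0$ and $H^1(\fh) = \fh^1$. The explicit construction gives the versal element $\alpha_v \in MC_\fh(R_v)$ with $R_v = \Bbbk\power{x_1,\ldots,x_a}/(P_1,\ldots,P_b)$, and by completeness of $\alpha_v$ our element $\beta \in MC_\fh(R)$ is gauge-equivalent to $\psi^*\alpha_v$ for some $\psi^*: R_v \to R$. The key computation is to identify $KS_\beta$ in terms of $\psi^*$. Writing $\alpha_v = \sum_i x_i \cdot \vec{e}_i$, its class in $\fh^1 \otimes \fm_{R_v}/\fm_{R_v}^2$ is $\sum_i \vec{e}_i \otimes \bar{x}_i$, which under the identification $H^1(\fh) = \fh^1$ is tautologically the universal element; its Kodaira--Spencer map $(\fm_{R_v}/\fm_{R_v}^2)^* \to \fh^1$ sends $\bar{x}_i^*$ to $\vec{e}_i$, hence is an isomorphism onto $\fh^1 = H^1(\fh)$ after accounting for the relations $P_j$ (whose linear parts vanish, since each $P_j$ collects the quadratic-and-higher terms of the Maurer--Cartan expression). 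Because gauge equivalence does not change the class in $H^1(\fh) \otimes \fm/\fm^2$ (the gauge vector fields are tangent to the Maurer--Cartan locus and trivial modulo $\fm^2$), we get $KS_\beta = KS_{\psi^*\alpha_v} = KS_{\alpha_v} \circ (\bar\psi)^*$, where $\bar\psi: \fm_R/\fm_R^2 \to \fm_{R_v}/\fm_{R_v}^2$ is the cotangent map of $\psi^*$. Since $KS_{\alpha_v}$ is an isomorphism, $KS_\beta$ is surjective (resp. an isomorphism) if and only if $\bar\psi$ is surjective (resp. an isomorphism).

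\textbf{From cotangent surjectivity to the conclusion.} It remains to translate the hypothesis on $\bar\psi$ into the definitions of complete and versal. If $KS_\beta$ is surjective, then the cotangent map $\fm_R/\fm_R^2 \to \fm_{R_v}/\fm_{R_v}^2$ is surjective; by the complete Noetherian local version of Nakayama's lemma this forces $\psi^*: R_v \to R$ itself to be surjective, and surjectivity of classifying maps is exactly what is needed to verify completeness of $\beta$ (any $\gamma \in MC_\fh(R')$ factors through $\alpha_v$, hence through $\beta$ after composing classifying maps). If moreover $KS_\beta$ is an isomorphism, then $\bar\psi$ is bijective; here I would invoke that $R = \Bbbk\power{\nov_1,\ldots,\nov_k}$ is \emph{smooth}, so $\dim \fm_R/\fm_R^2 = k$, while a cotangent isomorphism pins down the first-order part of the classifying map uniquely. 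Versality only requires that the induced map $\fm_S/\fm_S^2 \to \fm_R/\fm_R^2$ be uniquely determined, and this is precisely the content of $\bar\psi$ being determined by $\beta$ (independently of the choice of gauge representative, which we controlled above).

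\textbf{The main obstacle} I expect is the claim that gauge equivalence leaves the Kodaira--Spencer class unchanged and that the factorization $KS_\beta = KS_{\alpha_v}\circ(\bar\psi)^*$ holds on the nose: one must check that the gauge vector fields $v(\gamma,t)$ act trivially modulo $\fm^2$, which follows because $\gamma \in \fh^0 \ctens \fm$ already carries one power of $\fm$ and the leading term $\{\gamma(t),\alpha,\ldots\}$ lands in $\fm^2$ unless paired with the constant part, but in the minimal setting the $\alpha$-independent term $\ell^1(\gamma) = \{\gamma(t)\}$ vanishes. Making this sign- and filtration-bookkeeping precise — rather than the conceptual structure — is where the real work lies; everything else is formal manipulation of the definitions together with Nakayama.
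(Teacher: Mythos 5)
Your overall architecture matches the paper's: reduce to a minimal model $\fh$ via Theorem \ref{thm:mcqi}, use the explicit versal element $\alpha_v \in MC_\fh(R_v)$, write the corresponding element as $\psi^* \alpha_v$ for a classifying map $\psi^*: R_v \to R$, and translate the Kodaira--Spencer hypothesis into a first-order condition on $\psi^*$. The reduction step, the fact that $KS_{\alpha_v}$ is an isomorphism, and the factorization of $KS_\beta$ through the first-order part of $\psi^*$ are all correct (in the minimal case the classified element is literally \emph{equal} to $\psi^*\alpha_v$, so your careful discussion of gauge-invariance of the Kodaira--Spencer class, while correctly resolved, can be sidestepped).

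However, the completeness step is dualized the wrong way, and this is a genuine error rather than a typo. The cotangent map induced by $\psi^*: R_v \to R$ goes $\fm_{R_v}/\fm_{R_v}^2 \to \fm_R/\fm_R^2$, and surjectivity of $KS_\beta = KS_{\alpha_v} \circ (d\psi^*)^\vee$ forces this map to be \emph{injective}, not surjective. Consequently Nakayama is the wrong tool, and the conclusion that $\psi^*: R_v \to R$ is surjective is simply false in general: take $R_v = \Bbbk\power{x}$, $R = \Bbbk\power{\nov_1,\nov_2}$, $\psi^*(x) = \nov_1$; then $KS_\beta$ is surjective but $\psi^*$ is not. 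Moreover, surjectivity of $\psi^*$ would not yield completeness anyway: to show an arbitrary $\gamma = \chi^*\alpha_v$ over $S$ factors through $\beta$, you must extend $\chi^*: R_v \to S$ along $\psi^*: R_v \to R$, which requires $\psi^*$ to be \emph{split injective}; a surjective non-injective $\psi^*$ (e.g.\ a quotient $R = R_v/(y)$) produces a restricted family that is manifestly not complete. The correct mechanism --- and the one the paper uses --- is that injectivity of the cotangent map together with smoothness of $R$ (this is where the hypothesis $R = \Bbbk\power{\nov_1,\ldots,\nov_k}$ actually enters, not where you invoke it) implies, by the formal inverse function theorem applied to the composite $\Bbbk\power{x_1,\ldots,x_a} \to R_v \xrightarrow{\psi^*} R$, that $\psi^*$ admits a left inverse $\phi^*: R \to R_v$; then $\alpha_v = \phi^*\beta$, so everything that factors through $\alpha_v$ factors through $\beta$. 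Your versality paragraph is essentially right once this is repaired: when the cotangent map is an isomorphism, uniqueness of the first-order part of classifying maps for $\beta$ follows from the corresponding uniqueness for $\alpha_v$.
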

\begin{proof}
Let $\fh$ be a minimal model for $\fg$. 
Then $\beta$ corresponds to some $\alpha \in MC_\fh(R)$ under Theorem \ref{thm:mcqi}. 
Therefore $\alpha = \psi^* \alpha_v$ for some $\psi^*: R_v \to R$. 
The composition
\[\Bbbk\power{x_1,\ldots,x_a} \to \Bbbk\power{x_1,\ldots,x_a}/(P_1,\ldots,P_b) = R_v \xrightarrow{\psi^*} R = \Bbbk\power{\nov_1,\ldots,\nov_k}\]  
defines a map $\fm_v/\fm_v^2 \to \fm/\fm^2$ which is an isomorphism (respectively, injective) by hypothesis, because $KS_\alpha$ can be identified with $KS_\beta$. 
It follows by the inverse function theorem that the composition is an isomorphism (respectively, left-invertible). 
Therefore $\alpha_v = \phi^*\alpha$ for some $\phi^*: R \to R_v$, so $\alpha$ is complete. 
If $KS_\alpha$ is an isomorphism then $\phi^*$ defines an isomorphism $\fm/\fm^2 \xrightarrow{\sim} \fm_v/\fm_v^2$, from which it follows that $\alpha$ is furthermore versal. 
\end{proof}

\section{Deformations of $A_\infty$ categories}
\label{sec:defainfcat}

We introduce basic definitions of $A_\infty$ categories, following \cite[Chapter I]{Seidel2008} and \cite{fooo}, and their deformation theory (compare \cite{Seidel2003,Sheridan2017}). 

\subsection{$A_\infty$ categories}
\label{subsec:ainfcat}

If $R$ is a ring, we will define an $A_\infty$ algebra $\cA$ over $R$ to be a free graded $R$-module equipped with multilinear maps
\begin{align*}
\mu^s: \cA^{\otimes s} & \to \cA 
\end{align*}
for $s \ge 1$, of degree $2-s$, satisfying the $A_\infty$ relations:
\begin{equation}
\label{eqn:ainf}
 \sum (-1)^{|a_1|+\ldots+|a_i|+i} \cdot \mu^{s+1-j}(a_1,\ldots,a_i,\mu^{j}(a_{i+1},\ldots,a_{i+j}),a_{i+j+1},\ldots,a_s) = 0.
\end{equation}

The first $A_\infty$ relation says that $\mu^1(\mu^1(a)) = 0$, so $\mu^1$ is a differential; we denote its cohomology by $H^*(\cA)$. 
The second says that $\mu^2$ satisfies the Leibniz rule, so defines a product on $H^*(\cA)$. 
The third says that $\mu^2$ is associative up to a homotopy given by $\mu^3$, and in particular the product on $H^*(\cA)$ is associative, so $H^*(\cA)$ is an $R$-algebra.

There is a notion of $A_\infty$ morphism $F: \cA \dashrightarrow \cB$, which consists of multilinear maps $F^s: \cA^{\otimes s} \to \cB$ for $s \ge 1$ satisfying 
\begin{equation}
\label{eqn:ainfmorph}
 \sum_{i,j} F^{s+1-j}(a_1,\ldots,a_i,\mu_\cA^j(\ldots,a_{i+j}),\ldots,a_s) = \sum_{i_1,\ldots,i_k}\mu_\cB^{k}(F^{i_1}(a_1,\ldots,a_{i_1}),\ldots,F^{i_k}(\ldots,a_s)).
\end{equation}
There is an induced homomorphism of $R$-algebras $H^*(F): H^*(\cA) \to H^*(\cB)$.  
If it is an isomorphism we call $F$ a \emph{quasi-isomorphism}. 
As a special case, if $F^1$ is an isomorphism we call $F$ an isomorphism. 
Morphisms can be composed, and any $A_\infty$ isomorphism admits an inverse (i.e., a morphism $G: \cB \to \cA$ such that $G \circ F = \id$; in particular, not just an `inverse up to homotopy').
If $F^{\ge 2} = 0$, we say that $F$ is \emph{strict}.

There is an analogous notion of an $A_\infty$ category. 
An $R$-linear $A_\infty$ category $\cA$ consists of a set of objects, morphism spaces which are free graded $R$-modules $hom^*_\cA(K,L)$ for each pair of objects, and $A_\infty$ structure maps
\[ \mu^s: hom_\cA^*(L_0,L_1) \otimes \ldots \otimes hom_\cA^*(L_{s-1},L_s) \to hom_\cA^*(L_0,L_s) \]
satisfying \eqref{eqn:ainf}. 
One can define the cohomological category $H^*(\cA)$ by analogy with the case of algebras. 
The notion of $A_\infty$ morphism extends to a notion of $A_\infty$ functor; an $A_\infty$ functor between $A_\infty$ categories induces an honest functor between the cohomological categories. 
If this functor is an equivalence (respectively, an embedding, i.e. fully faithful), we call the $A_\infty$ functor a \emph{quasi-equivalence} (respectively, a \emph{quasi-embedding}). 
If the functor is bijective on objects and $F^1$ is an isomorphism on each morphism space, we call $F$ an isomorphism. 

\subsection{Curved $A_\infty$ categories}
\label{subsec:curvedainf}

There is a notion of \emph{curved} $A_\infty$ categories, which is exactly the same except one allows the existence of $\mu^0$ in the definition. 
The $A_\infty$ relations \eqref{eqn:ainf} no longer involve finitely many terms for each $s$, so one needs a reason for the infinite sum to converge. 
We will deal with this issue in the same way that we did for the Maurer--Cartan equation: namely by considering $A_\infty$ categories of the form $\cA_0 \hat{\otimes} R$ where $(R,\fm)$ is a complete Noetherian local $\Bbbk$-algebra, and requiring $\mu^0 \in \cA_0 \hat{\otimes} \fm$. 
The $A_\infty$ equations \eqref{eqn:ainf} then converge by completeness. 
There similarly exists a notion of a curved $A_\infty$ functor between curved $A_\infty$ categories, which is a set of maps $F^s$ for $s \ge 0$ satisfying \eqref{eqn:ainfmorph}, with $F^0 \in \cA_0 \hat{\otimes} \fm$. 

Note that our assumptions ensure that $\cA/\fm := \cA \otimes_R R/\fm$ is an (uncurved) $A_\infty$ category, and a curved functor $\cA \dashrightarrow \cB$ induces an uncurved functor $\cA/\fm \dashrightarrow \cB/\fm$.

Curved $A_\infty$ categories are not good objects to deal with. 
For example a curved $A_\infty$ category $\cA$ only has a well-defined cohomology category $H^*(\cA)$ if the curvature $\mu^0$ vanishes. 
In particular the notion of `quasi-equivalence' does not make sense for curved categories; since HMS is expressed as a quasi-equivalence between two $A_\infty$ categories, we really want to work in the uncurved world. 

The standard way to turn a curved $A_\infty$ category into an uncurved one is to form the category of \emph{objects equipped with bounding cochains} \cite{fooo}.
A bounding cochain $\alpha$ for an object $L$ of a curved $A_\infty$ category $\cA$ is an element $\alpha \in hom^1_{\cA_0}(L,L) \hat{\otimes} \fm$ satisfying the \emph{Maurer--Cartan equation}:\footnote{This is a different Maurer--Cartan equation from the one we considered for $L_\infty$ algebras.}
\[\sum_{s \ge 0} \mu^s(\alpha,\ldots,\alpha) = 0.\] 
If $L$ admits a bounding cochain, we say it is \emph{unobstructed}; otherwise we say it is \emph{obstructed}. 

There is an (uncurved) $A_\infty$ category $\cA^\bc$ whose objects consist of pairs $(L,\alpha)$ where $\alpha$ is a bounding cochain for $L$. 
If $\cA$ and $\cB$ are curved $A_\infty$ categories of the above type, then a curved $A_\infty$ functor $F: \cA \dashrightarrow \cB$ induces an (uncurved) $A_\infty$ functor 
\[F^\bc: \cA^\bc \dashrightarrow \cB^\bc.\]
If $F/\fm$ is a quasi-embedding, then so is $F^\bc$ by a comparison argument for the spectral sequences induced by the $\fm$-adic filtration.
We refer to \cite[\S 2.7]{Sheridan2017} for more details.

\subsection{Curved deformations of $A_\infty$ categories}

One might ask why we bothered to introduce curved $A_\infty$ categories and functors, since we already said they are not the objects we wish to deal with. 
The answer is that they fit naturally into the deformation theory framework outlined in \S \ref{sec:def}, whereas uncurved $A_\infty$ categories do not. 

Let $\cA_0$ be an $A_\infty$ category over $\Bbbk$, and $(R,\fm)$ a complete Noetherian local $\Bbbk$-algebra with $R/\fm = \Bbbk$. 
A \emph{deformation of $\cA_0$ over $R$} is a curved $A_\infty$ structure $\mu^*$ on $\cA := \cA_0 \hat{\otimes} R$, such that $\cA/\fm = \cA_0$. 
We say that deformations $\cA$ and $\cA'$ are \emph{equivalent} if there is an $A_\infty$ functor $F:\cA \dashrightarrow \cA'$ which is equal to the identity modulo $\fm$.

We introduce the space of \emph{Hochschild cochains on $\cA_0$}:
\begin{equation}
\label{eqn:hh}
 CC^*(\cA_0) := \prod_{L_0,\ldots,L_s} \Hom(hom_{\cA_0}^*(L_0,L_1) \otimes \ldots \otimes hom_{\cA_0}^*(L_{s-1},L_s), hom_{\cA_0}^*(L_0,L_s)).
\end{equation}
Equipping it with the Hochschild differential and the Gerstenhaber bracket turns it into a differential graded Lie algebra $\fg$. 
A deformation of $\cA_0$ over $R$ is equivalent to a Maurer--Cartan element $\alpha \in MC_\fg(\fm)$ for $\fg = CC^*(\cA_0)$. 
If two such Maurer--Cartan elements are gauge equivalent, then the corresponding deformations are equivalent.

\begin{rmk}
An uncurved deformation of $\cA_0$ over $R$ is equivalent to a Maurer--Cartan element with vanishing length-zero component: i.e., the part of the Maurer--Cartan element in the $s=0$ component $\prod_{L_0} hom_{\cA_0}^*(L_0,L_0)$ of \eqref{eqn:hh} is equal to $0$. 
These do not fit into the framework of \S \ref{sec:def} except in certain circumstances, for example when one knows \emph{a priori} (for example, for grading reasons) that $\alpha^0 = \alpha^1 = 0$, in which case one can instead use the \emph{truncated} Hochschild cochain complex \cite[\S 3b]{Seidel2003}.
\end{rmk}

\begin{rmk}
For this reason, the mantra `Hochschild cochains control deformations of an $A_\infty$ category' requires some caution. 
For example, even if $\cA_0$ is uncurved, there might exist curved deformations $\cA$ for which $\cA^\bc$ is empty (i.e., all objects might be obstructed). See \cite{Blanc2017} for a more detailed analysis.
\end{rmk}

\begin{thm}
\label{thm:ainfvers}
Let $\cB$ be a curved $A_\infty$ category over $R = \Bbbk\power{x_1,\ldots,x_a}$, such that the corresponding Kodaira--Spencer map
\[ KS_{\cB} : (\fm/\fm^2)^* \to HH^2(\cB_0)\]
is surjective. 
If $\cA$ is a curved $A_\infty$ category over a complete Noetherian local $\Bbbk$-algebra $(S,\fm)$ with $S/\fm = \Bbbk$, such that there is an $A_\infty$ isomorphism $\cB_0 \dashrightarrow \cA_0$, then there exists $\Psi^*: R \to S$ and an $A_\infty$ quasi-embedding
\[ (\Psi^* \cB)^\bc \hookrightarrow \cA^\bc.\] 
\end{thm}
\begin{proof}
One first shows that there is a curved $A_\infty$ functor $F: \Psi^* \cB \dashrightarrow \cA$ which reduces to the isomorphism $\cB_0 \dashrightarrow \cA_0$ modulo $\fm$. 
When $\cA_0 = \cB_0$, this is an immediate consequence of Theorem \ref{thm:vers} and the preceding discussion; to put ourselves in that situation, we use the fact that we can modify the $A_\infty$ structure on $\cA$ to make $F_0$ strict by \cite[\S 1c]{Seidel2008}. 

The resulting $A_\infty$ functor $F^\bc$ is then a quasi-embedding because $F/\fm$ is, by the discussion in \S \ref{subsec:curvedainf}.
\end{proof}

\begin{rmk}
\label{rmk:ainfiso}
Analogously to $L_\infty$ algebras, $A_\infty$ quasi-equivalences defined over a field can be inverted up to homotopy \cite[Corollary 1.14]{Seidel2008}.
Over a ring the same is not true, which is why we unfortunately need to assume that $\cA_0$ and $\cB_0$ are $A_\infty$ isomorphic (rather than quasi-equivalent) in our theorem. 
It's likely possible to prove a generalization that works for quasi-equivalences, but I don't know precisely how to formulate it. 
\end{rmk}

\section{The large volume limit}
\label{sec:lvl}

In \S \ref{subsec:moduli} we saw examples of the compactification $\mbarcpx$ of the complex moduli space, which includes the large complex structure limit point corresponding to a maximally degenerate variety. 
In this section we will discuss the corresponding compactification $\mbarkah$ of the K\"ahler moduli space, which includes the `large volume limit point' where $\omega = +\infty$. 
We can give a more precise definition of a \emph{formal neighbourhood} of the large volume limit point $\mbarhatkah(X)$, and its intersection $\mhatkah(X)$ with $\mkah(X)$. 
We will explain why the $A$-model can typically only be defined over this formal neighbourhood.

\begin{example}
If the K\"ahler moduli space is one-dimensional then $\mhatkah \subset \mbarhatkah$ will be isomorphic to a formal  punctured disc $\spec (\C \laurents{q})$ sitting inside the formal disc $\spec (\C\power{q})$.
\end{example}

\subsection{Closed-string: Gromov--Witten invariants}

\subsubsection{One complexified K\"ahler form}

We will not go into the precise definition of the Gromov--Witten invariants, which is rather technical. 
However, the simple fact that they are defined by a sum of counts of holomorphic maps $u: \Sigma \to X$, weighted by $\exp\left(2\pi\iii \cdot \omega^\C\left(u\right)\right)$ (where $\omega^\C(u) := \int_\Sigma u^* \omega^\C$), allows us to see how one should define the large volume limit of the K\"ahler moduli space.

To start with, we observe that the magnitude of this weight is $\exp\left(-2\pi \cdot \omega(u)\right)$, so one might hope that the sums converge if $\omega^\C$ is close enough to the large volume limit because the weights become very small as $\omega \to +\infty$.
However, actually proving convergence would require estimates on the growth rate of Gromov--Witten invariants which are difficult to obtain, so it doesn't make sense to bake them into the theory. 
Instead we work in a \emph{formal} neighbourhood of the large volume limit. 

The idea is to start with a complexified K\"ahler form $\omega^\C = B + \iii\omega$, and consider the family of complexified K\"ahler forms $\omega^\C_q := B + (\log(q)/2\pi \iii) \cdot \omega$ in the limit $q \to 0^+$. 
A curve $u: \Sigma \to X$ then gets weighted by 
\[ e^{2\pi\iii \cdot \omega^\C_q(u)} = q^{\omega(u)} \cdot e^{2\pi \iii \cdot B(u)}.\]
We regard these weights as elements of the \emph{Novikov field} $\Lambda := \C \laurents{q^\R}$ (the algebra of complex combinations of real powers of $q$, completed with respect to the $q$-adic filtration). 
Convergence of our sums in the Novikov field requires that for every $N$, there are finitely many curves $u$ with $\omega(u) \le N$. 
This is true by Gromov's compactness theorem for holomorphic curves \cite{Gromov1985}.

\begin{rmk}
\label{rmk:onlykahlclass}
Observe that, given the complex manifold $X$, the Gromov--Witten invariants only depend on the cohomology class $\left[\omega^\C\right]$ of the complexified K\"ahler form $\omega^\C$. 
In fact, one can easily see that they only depend on the image of $\left[\omega^\C\right]$ in $H^2(X;\C)/H^2(X;\Z)$, since adding an integral class to the $B$-field does not change the weight $e^{2\pi \iii \cdot B(u)}$.
\end{rmk}

\subsubsection{Many complexified K\"ahler forms}
\label{subsubsec:all}

Following these ideas through, we can give a precise definition of a formal neighbourhood of the large volume limit point of the K\"ahler moduli space.
If $X$ is a K\"ahler manifold, we set 
\[ \overline{NE}(X) := \{ u \in H_2(X): \omega(u) \ge 0 \text{ for all K\"ahler forms $\omega$}\}\]
where $H_2(X) := H_2(X;\Z)/\text{torsion}$ ($\overline{NE}(X)$ is the `closure of the cone of effective curve classes on $X$'). 
This cone is strongly convex, so the group ring $\C\left[\overline{NE}(X)\right]$ has a unique maximal ideal $\fm$. 
We denote the completion at this ideal by $R_X := \C\power{\overline{NE}(X)}$.

\begin{defn}
The formal neighbourhood of the large volume limit point of the K\"ahler moduli space of $X$ is
\[ \mbarhatkah(X) := \spec \left( R_X \right).\] 
\end{defn}

Let's explain why this is a good definition. We can define the Gromov--Witten invariants to count holomorphic curves $u: \Sigma \to X$ with a weight $\nov^{[u]} \in \C\left[\overline{NE}(X)\right]$. 
Gromov compactness ensures that these infinite weighted sums converge in the completed ring, so the Gromov--Witten invariants define regular functions on $\mbarhatkah(X)$. 
Furthermore, a complexified K\"ahler form $\omega^\C$ defines a $\Lambda$-point $p$ of this scheme:
\begin{align}
\label{eqn:lambdapoint} p^*: R_X & \to \Lambda \\
\nonumber p^*\left(\nov^u \right) & := q^{\omega(u)} \cdot  e^{2\pi \iii \cdot B(u)},
\end{align}
and the Gromov--Witten invariants of $\left(X,\omega^\C\right)$ are obtained by specializing to this $\Lambda$-point. 

\subsubsection{At the large volume limit point}
\label{subsubsec:gwlvl}

The large volume limit point is the $\C$-point $0 \in \mbarhatkah(X)$ corresponding to the maximal ideal $\fm$. 
The specialization of the Gromov--Witten invariants to the large volume limit point is rather simple: a curve $u$ gets weighted by $1$ if it is constant, and by $0$ if it is non-constant. 
So the Gromov--Witten invariants at the large volume limit point count constant maps of Riemann surfaces to $X$: they essentially only contain information about intersection theory on $X$.

\begin{rmk}
Let's assume for the moment that $\overline{NE}(X)$ is rational polyhedral, so $\C[\overline{NE}(X)]$ is the coordinate ring of the affine toric variety corresponding to its dual cone (the nef cone). 
The formal neighbourhood of the large volume limit point is, by definition, the formal neighbourhood of the unique torus-fixed point in this toric variety corresponding to the vertex of the nef cone. 
The toric divisors correspond to degenerate K\"ahler forms, and their complement (the intersection of the formal neighbourhood with the dense torus orbit) to honest K\"ahler forms.
The dense torus orbit has coordinate ring $\C[H_2(X)] \supset \C[\overline{NE}(X)]$. 
Thus it makes sense to define
\[ \mhatkah(X) := \spec \left( R_X \otimes_{\C\left[\overline{NE}(X)\right]} \C[H_2(X)]\right).\]
Note that the $\Lambda$-point corresponding to a complexified K\"ahler form $\omega^\C$ lies in $\mhatkah(X) \subset \mbarhatkah(X)$, but the large volume limit point does not.
\end{rmk}


\subsection{Open-string: Fukaya category}
\label{subsec:fuk}

We summarize the definition of the Fukaya category (see \cite{Auroux2013} for an excellent introduction). 
We will start by assuming that the $B$-field vanishes, so $\omega^\C = \iii\omega$. 
We will explain how to incorporate a non-zero $B$-field into the definition in \S \ref{subsubsec:B}, but remark that it will be ignored for the rest of the paper so the uninterested reader can safely skip that section.

\subsubsection{One K\"ahler form}
\label{subsubsec:onekah}

If $\omega^\C = \iii \omega$, then the objects of the Fukaya category are \emph{Lagrangian branes}, which consist of:
\begin{itemize}
\item a Lagrangian submanifold $L \subset X$: i.e., a half-dimensional submanifold with $\omega|_L = 0$.
\item a spin structure on $L$, and a grading of $L$ (in the sense of \cite{Seidel1999}).
\end{itemize}
We will not talk further about the spin structure (which allows us to define \emph{signed} counts of holomorphic curves) or grading (which allows us to define \emph{gradings} on our morphism spaces). 

The morphism space between transversely-intersecting Lagrangian branes $L_0,L_1$ is the (graded) vector space with basis elements indexed by intersection points:
\[ hom^*_{\fuk\left(X,\omega^\C\right)}(L_0,L_1) := \bigoplus_{p \in L_0 \cap L_1} \Bbbk \cdot p\]
for some coefficient field $\Bbbk$ to be specified.\footnote{
Of course not every pair of Lagrangians intersects transversely, but there exist technical workarounds that we won't go into (for example, one can make an auxiliary choice of perturbations of each pair of Lagrangians making them transverse \cite{Seidel2008}).}

The ``composition maps'' in this category are $\Bbbk$-linear graded maps
\[ \mu^s: hom^*(L_0,L_1) \otimes hom^*(L_1,L_2) \otimes \ldots \otimes hom^*(L_{s-1},L_s) \to hom^*(L_0,L_s)[2-s].\]
They are defined by giving their matrix coefficients in terms of the bases of intersection points: the coefficient of $p_0 \in L_0 \cap L_s$ in $\mu^s(p_1,\ldots,p_s)$ is the (signed) count of holomorphic maps $u: \mathbb{D} \to X$, where $\mathbb{D} \subset \C$ is the closed unit disc with boundary points $\zeta_0,\zeta_1,\ldots \zeta_s$ removed; we require that $u(\partial_i \mathbb{D}) \subset L_i$, and $u(z) \to p_i$ as $z\to \zeta_i$ (see Figure \ref{fig:fuk}). 

\begin{figure}
\begin{center}
\hfill\includegraphics[scale=0.5]{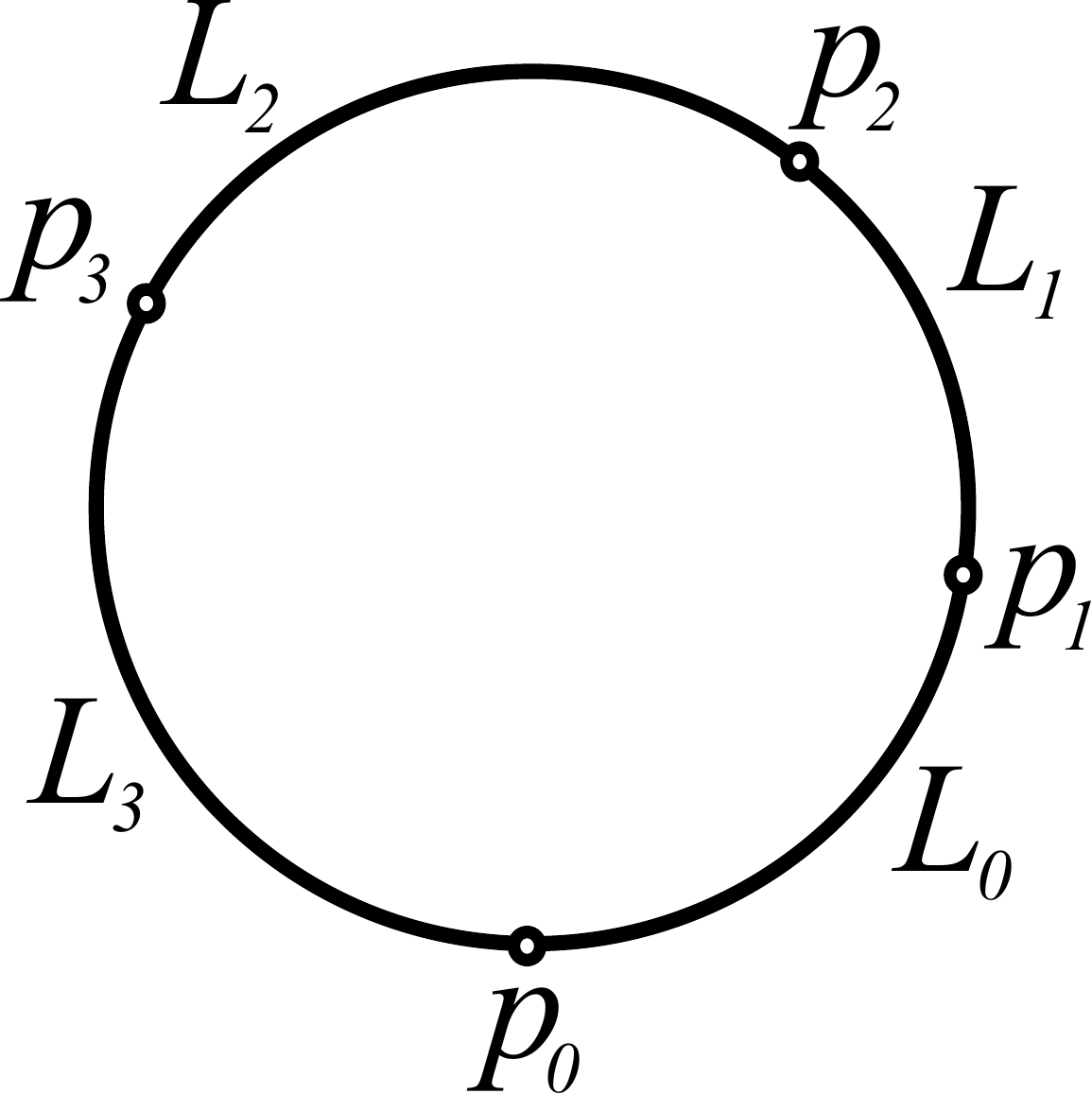}
\hfill\includegraphics[scale=0.5]{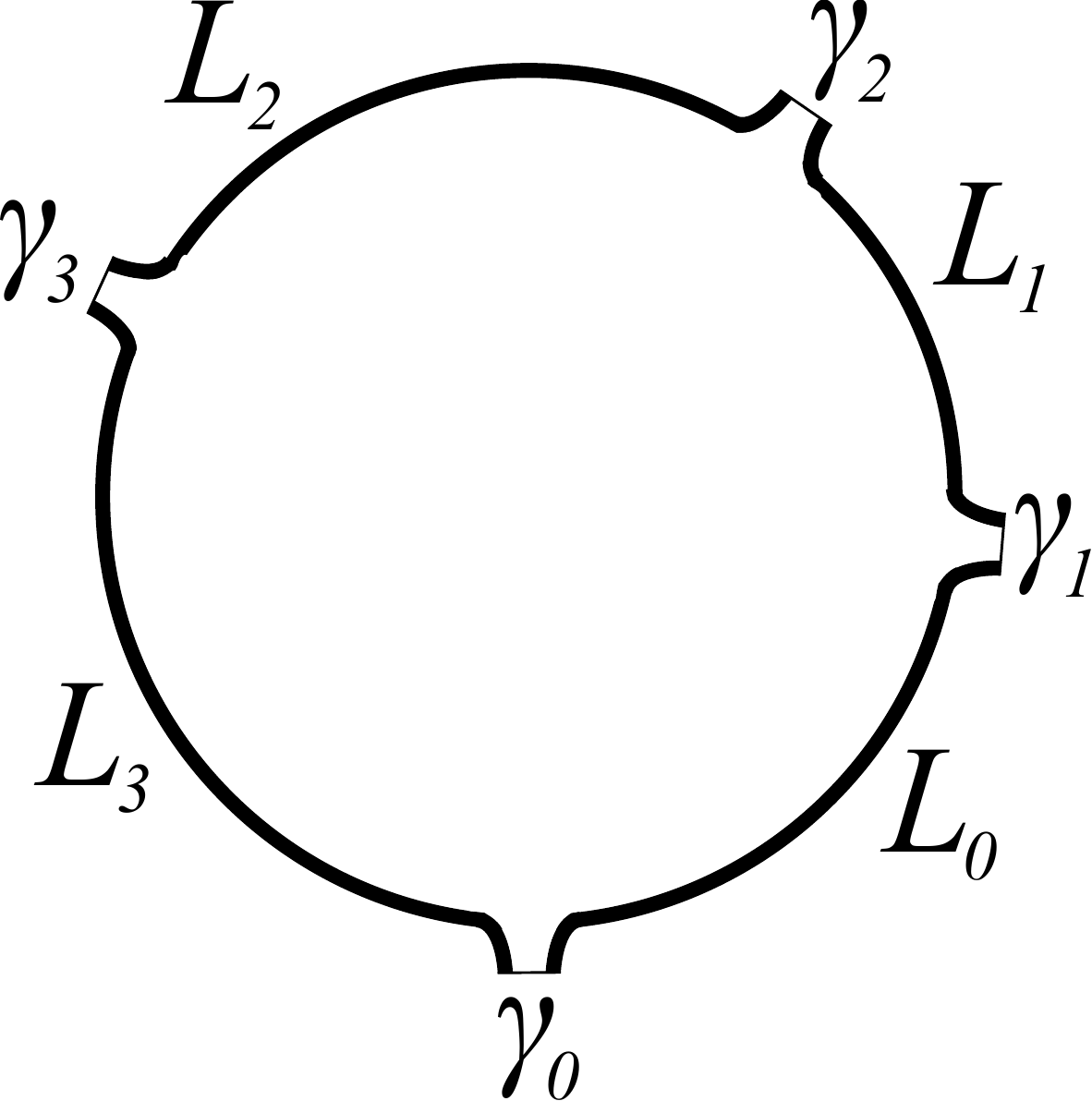}\hfill
\caption{\label{fig:fuk} On the left, a picture of a holomorphic disc contributing to the coefficient of $p_0 \in hom^*(L_0,L_3)$ in $\mu^3(p_1,p_2,p_3)$. On the right, an explanation of why these are called `open-string' invariants: the intersection points $p_i$ can equivalently be interpreted as constant `strings with boundary' $\gamma_i$ stretching between the `branes' $L_i$. 
Compare Figure \ref{fig:GW}.}
\end{center}
\end{figure}

Similarly to the Gromov--Witten invariants, we would like to count the disc $u$ with weight $\exp(2\pi \iii \cdot \omega^\C(u))$ (where again $\omega^\C(u) := \int_{\mathbb{D}} u^*\omega^\C$). 
However, as happened for the Gromov--Witten invariants, we have no good reason for the weighted sum to be finite, even close to the large volume limit. 
Therefore we are forced to work over the Novikov field $\Lambda$, and weight curves by 
\[ e^{2\pi \iii \cdot \omega^\C_q(u)} = q^{\omega(u)} \in \Lambda.\]
Thus we must take $\Bbbk = \Lambda$ above. 

The composition maps $\mu^s$ satisfy the $A_\infty$ relations:
\[ \sum_{i,j} \pm \mu^{s+1-j}(p_1,\ldots,p_i,\mu^j(p_{i+1},\ldots,p_{i+j}),p_{i+j+1},\ldots,p_s)=0,\]
so the Fukaya category $\fuk\left(X,\omega^\C\right)$ is a \emph{$\Lambda$-linear $A_\infty$ category}. 
This category may be curved, and we would like to turn it into an uncurved $A_\infty$ category using bounding cochains as in \S \ref{subsec:curvedainf}. 
To put ourselves in an appropriate context to introduce bounding cochains, we use the $q$-adic filtration on morphism spaces. 
For this we need to observe that the Fukaya category can in fact be defined over the complete Noetherian local $\C$-algebra $\Lambda_0 := \C \power{q^{\R_{\ge 0}}}$: that is because the holomorphic discs $u$ that define its composition maps have non-negative symplectic area $\omega(u) \ge 0$.\footnote{We're sweeping some thorny technical details under the rug here: for example, the perturbations that one needs to introduce to deal with non-transverse intersections of Lagrangians can interfere with this positivity of symplectic areas (the failure of positivity can however be made arbitrarily small by choosing sufficiently small perturbations). Moreover the invariance properties of the Fukaya category over the Novikov ring are much subtler than over the Novikov field.}
We then consider bounding cochains in
\[ hom^1_{\fuk\left(X,\omega^\C\right)}(L,L)_{>0} := \bigoplus_{p \in L_0 \cap L_1} \Lambda_{>0} \cdot p \subset hom^1_{\fuk\left(X,\omega^\C\right)}(L,L),\]
where $\Lambda_{>0} \subset \Lambda$ is the maximal ideal consisting of Laurent series with strictly positive powers of $q$. 
This ensures the convergence of the Maurer--Cartan equation, by completeness of the $q$-adic filtration, so we can define the corresponding uncurved $\Lambda_0$-linear $A_\infty$ category of bounding cochains, then tensor with $\Lambda$ to obtain the uncurved $\Lambda$-linear $A_\infty$ category $\fuk\left(X,\omega^\C\right)^\bc$.

\begin{rmk}
\label{rmk:fukonlykah}
Given the complex manifold $X$, the Fukaya category $\fuk\left(X,\omega^\C\right)^\bc$ only depends on the K\"ahler class $[\omega] \in H^2(X;\R)$ up to quasi-equivalence (compare Remark \ref{rmk:onlykahlclass}).
That is because the space of K\"ahler forms with the same K\"ahler class is convex, so any two are symplectomorphic by Moser's theorem, and the Fukaya category is a symplectic invariant.
\end{rmk}

Finally, let us recall that the category appearing on one side of HMS was denoted `$\Dfuk$' rather than `$\fuk$'. 
If $\cA$ is an $A_\infty$ category (which we'll assume to be linear over a field), then $\mathsf{DA}$ denotes the smallest $A_\infty$ category containing $\cA$ which is \emph{triangulated} and \emph{split-closed} (it is denoted $\Pi(Tw \cA)$ in \cite[\S 4c]{Seidel2008}). 
Since $\Dcoh(X^\circ)$ has these properties, HMS will only have a chance to be true if we enlarge $\fuk$ to $\Dfuk$ in this way. 
One can think of this formal enlargement as an algebraic substitute for having to include all sorts of singular Lagrangians in one's definition of the Fukaya category, which might cause serious analytical difficulties. 
It is a fact that there is a quasi-isomorphism of differential graded Lie algebras $CC^*(\mathsf{D}\cA) \dashrightarrow CC^*(\cA)$, so curved deformations of $\cA$ are equivalent to curved deformations of $\mathsf{D}\cA$.

\subsubsection{Many K\"ahler forms}

It is not immediately clear how to carry out the analogue of \S \ref{subsubsec:all} for the Fukaya category. 
Whereas Gromov--Witten invariants count maps $u: \Sigma \to X$ defining a homology class $[u] \in H_2(X)$, to which we can assign a weight $\nov^{[u]} \in \C\left[\overline{NE}(X)\right]$, the discs $u$ that we count in the definition of the Fukaya category don't define a homology class $[u]$ in any fixed homology group. 

The solution to this problem was explained by Seidel \cite{Seidel2002}: we use the \emph{relative} Fukaya category $\fuk(X,D)$. 
This depends on an auxiliary piece of data: a normal-crossings divisor $D \subset X$ which supports an effective ample divisor. 
We will only consider certain Lagrangians that avoid $D$, so the discs $u$ that we count in the definition of the Fukaya category define homology classes $[u] \in H_2(X,X \setminus D)$.

\begin{example}
Let $X$ be a hypersurface in a toric variety $V$, as in Batyrev's mirror construction (Example \ref{eg:Batyrev}). 
We will use the divisor $D \subset X$ that is the intersection of $X$ with the toric boundary divisor of $V$ in this case. 
\end{example}

More precisely, we suppose that $\alpha \in \Omega^1(X \setminus D;\R)$ is a one-form such that $\omega|_{X \setminus D} = d\alpha$. 
This defines a lift of $[\omega] \in H^2(X;\R)$ to $[\omega;\alpha] \in H^2(X,X \setminus D;\R)$, by
\[ [\omega;\alpha](u) := \int_u \omega - \int_{\partial u} \alpha.\]
We observe that, if $D = \cup_{p \in P} D_p$ is the decomposition of $D$ into irreducible components, then $H^2(X,X \setminus D;\R)$ has a basis given by the Poincar\'e duals to the $D_p$. 
Therefore we have 
\[ [\omega;\alpha] = \sum_p \lambda_p \cdot PD(D_p)\]
for some $\lambda_p$. 
If $\lambda_p > 0$ for all $p$, then we call the pair $(\omega,\alpha)$ a \emph{relative K\"ahler form}.

\begin{rmk}
The condition $\lambda_p > 0$ for all $p$ is called \emph{convexity at infinity}. 
This is related to $X \setminus D$ being Stein: for example, if $\alpha = d^c \rho$ for some plurisubharmonic $\rho: X \setminus D \to \R$, then convexity at infinity means $\rho$ is exhausting (bounded below and proper). 
In fact, there exists a relative K\"ahler form with cohomology class $\sum_p \lambda_p \cdot PD(D_p)$ if and only if the $\R$-divisor $\sum_p \lambda_p \cdot D_p$ is effective and ample \cite[Lemma 3.3]{Sheridan2017}. 
\end{rmk}

We now define 
\[ \overline{NE}(X,D) := \{u \in H_2(X,X \setminus D): u \cdot E \ge 0 \text{ for all effective ample $\R$-divisors $E$ supported on $D$}\}.\]
We observe that any holomorphic curve $u: (\Sigma, \partial \Sigma) \to (X,X \setminus D)$ has homology class $[u] \in \overline{NE}(X,D)$.\footnote{If $u$ were a map from a smooth curve $\Sigma$ which is not contained in $D$, we would have $u \cdot E \ge 0 $ for all \emph{effective} divisors $E$ supported on $D$ by positivity of intersection. 
The reason we restrict to ample divisors has to do with the fact that we must take into account nodal curves $\Sigma$, some of whose components may get mapped inside $D$. 
Such components may have negative intersection number with components of $D$, but will still have positive intersection number with an ample class.}
As before, the algebra $\C[\overline{NE}(X,D)]$ has a unique toric maximal ideal $\fm$, and we complete at it to get $R_{X,D} := \C\power{\overline{NE}(X,D)}$. 

\begin{defn}
The formal neighbourhood of the large volume limit point of the relative K\"ahler moduli space is
\[ \mbarhatkah(X,D) := \spec\left(R_{X,D} \right).\]
\end{defn}

Observe that there is a map $\mbarhatkah(X,D) \to \mbarhatkah(X)$, induced by $H_2(X) \to H_2(X,X \setminus D)$: this is the forgetful map from the moduli space of complexified relative K\"ahler forms on $(X,D)$ to the moduli space of complexified K\"ahler forms on $X$.

The objects of the Fukaya category are \emph{relative Lagrangian branes}, which consist of:
\begin{itemize}
\item a Lagrangian brane $L \subset X$, avoiding $D$.
\item a function $h: L \to \R$ with $dh = \alpha|_L$.
\end{itemize}

The morphism spaces are 
\[ hom^*_{\fuk(X,D)}(L_0,L_1) := \bigoplus_{p \in L_0 \cap L_1} R_{X,D} \cdot p.\]
The $A_\infty$ structure maps count holomorphic discs $u$ as before, now weighted by $\nov^{[u]} \in R_{X,D}$. 

Analogously to what happened for the Gromov--Witten invariants, the relative K\"ahler class $[\omega;\alpha]$ determines a $\Lambda$-point $p$ of $\mbarhatkah(X,D)$ by
\begin{align*}
p^*: R_{X,D} & \to \Lambda \\
p^*\left(\nov^u\right) & := q^{[\omega;\alpha](u)}.
\end{align*}
Furthermore, there is an embedding
\[ \fuk(X,D)_p \hookrightarrow \fuk(X,\omega),\]
where the subscript $p$ means we take the fibre of the category over the $\Lambda$-point $p$. 
Explicitly, this means we tensor each morphism space with $\Lambda$, via the morphism $p^*$. 

The embedding is obvious on the level of objects: it simply forgets the function $h$.
It sends a morphism $p \in L_0 \cap L_1$ of $\fuk(X,D)_p$ to the morphism $q^{h_1(p) - h_0(p)} \cdot p$ of $\fuk(X,\omega)$. 
One can check that this embedding respects the $A_\infty$ maps: the structure maps count the same holomorphic discs, and a simple application of Stokes' theorem shows that each disc gets counted with the same weight in $\Lambda$ \cite[\S 8.1]{Sheridan2015}.

By a similar argument, there is an embedding
\[ \left(\fuk(X,D)^\bc\right)_p \hookrightarrow \fuk(X,\omega)^\bc.\]
One needs to be a little bit careful to ensure that the image of an order-$\fm$ bounding cochain is a positive-energy bounding cochain, because $q^{h_1(p)-h_0(p)}$ may be a negative power of $q$: in general the embedding only exists after one applies the reverse Liouville flow to the Lagrangians for a sufficiently long time (see \cite[Remark 5.22]{Sheridan2017}).
 
\subsubsection{At the large volume limit point}
\label{subsubsec:atlvl}

By analogy with what we saw for the Gromov--Witten invariants, the large volume limit point $0 \in \mbarhatkah(X,D)$ is defined to be the $\C$-point corresponding to the toric maximal ideal $\fm$. 
The fibre $\fuk(X,D)_0$ of the relative Fukaya category over the large volume limit point is the $\C$-linear $A_\infty$ category with the same objects, counting holomorphic discs with weight $1$ if they do not intersect $D$ and weight $0$ if they intersect $D$.
In other words, it is $\fuk(X \setminus D)$, the Fukaya category of the affine variety $X \setminus D$. 
This $A_\infty$ category is uncurved: this follows from the fact that the Lagrangians we consider are exact in $X \setminus D$, so only bound constant holomorphic discs.

It is then clear that $\fuk(X,D)$ is a (possibly curved) deformation of $\fuk(X\setminus D)$ over $R_{X,D}$. 

\subsubsection{Complexified K\"ahler forms}
\label{subsubsec:B}

Now we explain how to define $\fuk\left(X,\omega^\C\right)$ in the case that the $B$-field is non-vanishing: $\omega^\C = B + \iii \omega$ (compare \cite{Kapustin2004a}). 
Objects are Lagrangian branes $L$ as before, now equipped with a complex vector bundle $\mathcal{E}_L$ with a unitary connection $\nabla_L$, whose curvature is required to be $\id \otimes B|_L$. 
The morphism spaces are now 
\[ hom^*_{\fuk\left(X,\omega^\C\right)}(L_0,L_1) := \bigoplus_{p \in L_0 \cap L_1} \Hom(\mathcal{E}_{0,p},\mathcal{E}_{1,p}).\]
The $A_\infty$ composition maps count holomorphic discs $u$ as in \S \ref{subsubsec:onekah}. 
Each such disc determines a `monodromy' map
\[\mathrm{mon}(u): \Hom(\mathcal{E}_{0,p_1},\mathcal{E}_{1,p_1}) \otimes \ldots \otimes \Hom(\mathcal{E}_{s-1,p_s},\mathcal{E}_{s,p_s}) \to \Hom(\mathcal{E}_{0,p_0},\mathcal{E}_{s,p_0})\]
by composition interspersed with parallel transport maps
\[ \mathcal{E}_{i,p_i} \to \mathcal{E}_{i,p_{i+1}}\]
along $\partial_i \mathbb{D}$ with respect to $\nabla_i$.
The contribution of the disc $u$ to the composition map $\mu^s$ is then $q^{\omega(u)} \cdot e^{2\pi \iii \cdot B(u)} \cdot \mathrm{mon}(u)$.

\begin{rmk}
The situation considered in \S \ref{subsubsec:onekah} corresponded to the case $B=0$, and each $\mathcal{E}$ was the trivial line bundle with trivial connection.
\end{rmk}

\begin{rmk}
The condition that the curvature of each $\nabla_L$ is $\id \otimes B|_L$ ensures that $e^{2\pi \iii B(u)} \cdot \mathrm{mon}(u)$ is invariant under isotopies of $u$, by the relationship between parallel transport and curvature.
\end{rmk}

\begin{rmk}
Given the complex manifold $X$, the Fukaya category $\fuk\left(X,\omega^\C\right)$ only depends up to equivalence on the image of the complexified K\"ahler class $\left[\omega^\C\right]$ in $H^2(X;\C)/H^2(X;\Z)$ (compare Remark \ref{rmk:onlykahlclass}). 
In light of Remark \ref{rmk:fukonlykah}, to show this it suffices to check that for any closed real 2-form $B'$ representing an integral cohomology class, adding $B'$ to the $B$-field does not change the Fukaya category up to equivalence. 
Indeed, there exists a complex line bundle $\mathcal{V}$ with unitary connection $\nabla_{\mathcal{V}}$ on $X$ having curvature form $B'$: so there is an equivalence
\begin{align*}
\fuk(X,B+\iii\omega) & \to \fuk(X,B'+B+\iii\omega), \quad\text{ which on the level of objects sends} \\
(L,(\mathcal{E}_L,\nabla_L)) & \mapsto (L,(\mathcal{V},\nabla_{\mathcal{V}})\otimes (\mathcal{E}_L,\nabla_L) ).
\end{align*}
\end{rmk}

Now we consider the relative situation. 
A complexified relative K\"ahler form is a complexified K\"ahler form $\omega^\C$ together with $\theta^\C = \beta + \iii \alpha \in \Omega^1(X \setminus D;\C)$ satisfying $\omega^\C|_{X \setminus D} = d\theta^\C$, such that $(\omega,\alpha)$ is a relative K\"ahler form. 
This defines a lift of $\left[\omega^\C\right] \in H^2(X;\C)$ to $[\omega^\C;\theta^\C] \in H^2(X,X \setminus D;\C)$ as before, and hence a $\Lambda$-point $p$ of $\mbarhatkah(X,D)$ by
\begin{align*}
p^*: R_{X,D} & \to \Lambda \\
p^*\left(\nov^u\right) & := q^{[\omega;\alpha](u)} \cdot e^{2\pi\iii \cdot [B;\beta](u)}.
\end{align*}
There is an embedding
\begin{equation}
\label{eqn:embrelcomp}
 \fuk(X,D)_p \hookrightarrow \fuk\left(X,\omega^\C\right)
\end{equation}
as before: on the level of objects, it sends 
\[ L \mapsto (L,(\underline{\C},\nabla_\beta)|_L)\]
where $\nabla_\beta$ is the connection on the trivial line bundle $\underline{\C}$ on $X \setminus D$ with unitary connection $d + 2\pi \iii \beta$.

\begin{rmk}
Note that we did not need to alter the definition of $\fuk(X,D)$ for the embedding \eqref{eqn:embrelcomp} to exist: the relative Fukaya category `already knew' about the $B$-fields. 
We note that it would be natural to extend the definition of $\fuk(X,D)$ to include unitary vector bundles with flat connections on the Lagrangians.  
\end{rmk}

\section{Versality in the interior of the moduli space}
\label{sec:versint}

\subsection{Bogomolov--Tian--Todorov theorem}

Let $X$ be a complex projective manifold. 
Then there is a differential graded Lie algebra $\fg_{cs}$, with cohomology $H^*(\fg_{cs}) \cong H^*(T_X)$, which controls the deformations of $X$ \cite{Manetti2004}. 
In other words, for any complete Noetherian local $\C$-algebra $R$, deformations of $X$ over $R$ up to isomorphism are in bijection with Maurer--Cartan elements in $MC_{\fg_{cs}}(R)$ up to gauge equivalence. 
 
The Bogomolov\cite{Bogomolov1978}--Tian\cite{Tian1987}--Todorov\cite{Todorov1989} theorem can be phrased as saying that if $X$ is Calabi--Yau then $\fg_{cs}$ is homotopy abelian, so the versal deformation space of $X$ is a formal neighbourhood of $0 \in H^1(T_X)$ \cite{Goldman1990,Iacono2010}. 

There is a larger differential graded Lie algebra $\fg \supset \fg_{cs}$, with cohomology $H^*(\fg) \cong HT^*(X) := H^*(\wedge^* T_X)$. 
It controls (curved) deformations of the $A_\infty$ category $\Dcoh(X)$ in accordance with \S \ref{sec:defainfcat}, because there is a quasi-isomorphism
\[ I^*: \fg \dashrightarrow CC^*(\Dcoh(X)) \] 
(this is the composition of Kontsevich's formality quasi-isomorphism $\fg \dashrightarrow CC^*(X)$ \cite{Kontsevich2003} with the quasi-isomorphism $CC^*(X) \dashrightarrow CC^*(\Dcoh(X))$ constructed in \cite{Lowen2005}). 
It also is homotopy abelian when $X$ is Calabi--Yau (see \cite{Kontsevich2003,Barannikov1998} for the complex case), so the versal deformation space of $\Dcoh(X)$ is a formal neighbourhood of $0$ in the vector space
\[ HH^2(\Dcoh(X)) \cong HT^2(X).\]
Deformations of the category `coming from deformations of the complex structure' correspond to $H^1(T_X) \subset HT^2(X)$.

\begin{rmk}
Recall that we're only describing \emph{curved} deformations of the category: we are not claiming that all objects are unobstructed under all such deformations. 
Our approach later when actually studying deformations of $\Dcoh(X)$ will be to fix a deformation, fix a generating subcategory which happens to remain uncurved under that deformation, and focus attention on that subcategory. 
Thus we bypass the problem of dealing with obstructed objects, albeit in an essentially ad-hoc way. 
\end{rmk}

\subsection{Floer homology}
\label{subsec:HF}

Let $X$ be a Calabi--Yau K\"ahler manifold with complexified K\"ahler form $\omega^\C = B + \iii \omega$. 
Then there is an $L_\infty$ algebra $CF^*(X)$, called the \emph{Hamiltonian Floer cochains on $X$}, whose cohomology is $HF^*(X) \cong H^*(X;\Lambda)$. 

The cochain complex $(CF^*(X),\ell^1)$ was introduced by Floer \cite{Floer1989}, and should be thought of as a semi-infinite Morse complex for a certain `action functional' on the free loop space $\mathcal{L} X := \{ \gamma: S^1 \to X\}$. 
Explicitly, we choose a function $H: S^1 \times X \to \R$, and consider the corresponding (multi-valued) action functional $\mathcal{A}_H$ on $\mathcal{L}X$, whose differential is given by
\[ d\mathcal{A}_H(\xi) := \int_{S^1} \omega(\gamma'(t)-V_{H_t},\xi(t)) dt\]
where $\gamma \in \mathcal{L} X$, $\xi \in T_\gamma \mathcal{L} X$ is identified with a section $\xi$ of $\gamma^* TX$ by abuse of notation, and $V_{H_t}$ is the `Hamiltonian vector field corresponding to $H_t$', characterized by the property that $\omega(V_{H_t},-) = dH_t$. 
Critical points of this functional are loops $\gamma \in \mathcal{L} X$ which are time-1 orbits of the vector field $V_{H_t}$:
\begin{align*}
\gamma: S^1 & \to X \\
\gamma'(t) &= V_{H_t}.
\end{align*}
The Floer cochains are the free $\Lambda$-vector space spanned by such orbits:
\[ CF^*(X) := \bigoplus_\gamma \Lambda \cdot \gamma,\]
graded by Conley--Zehnder index (we refer to \cite{Salamon1999} for details). 

The $L_\infty$ operations $\ell^s$ on $CF^*(X)$ are constructed analogously to the $A_\infty$ operations in the Fukaya category: the coefficient of $\gamma_0$ in $\ell^s(\gamma_1,\ldots,\gamma_s)$ is the count of (pseudo-)holomorphic maps $u: \Sigma \to X$ with domain as in Figure \ref{fig:linf}, asymptotic to the orbits $\gamma_i$ as shown, weighted by $q^{\omega(u)}\cdot e^{2\pi \iii B(u)}$ (compare \cite{Fabert2013}). 

\begin{figure}
\begin{center}
\hfill\includegraphics[scale=0.5]{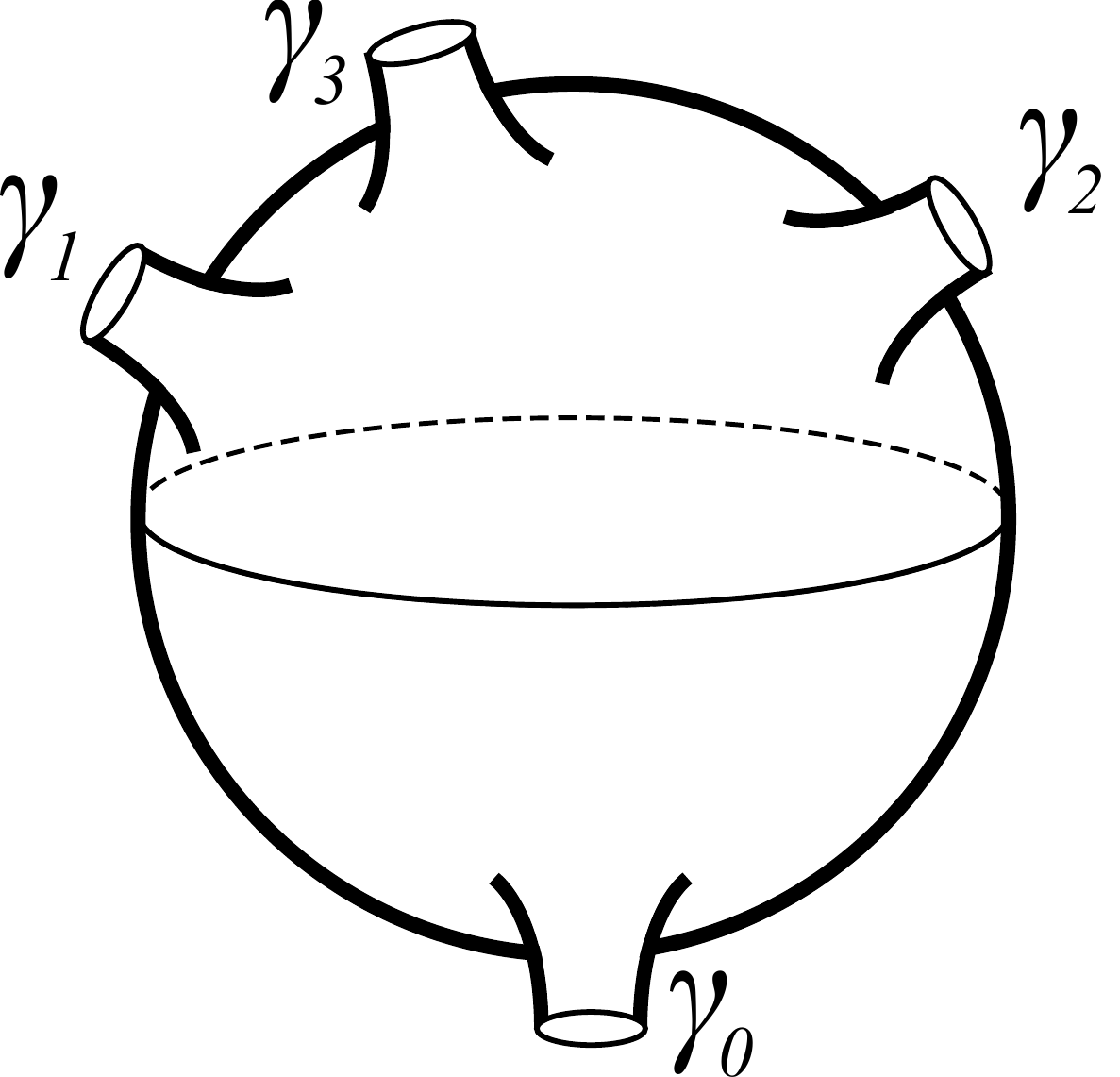}\hfill
\includegraphics[scale=0.5]{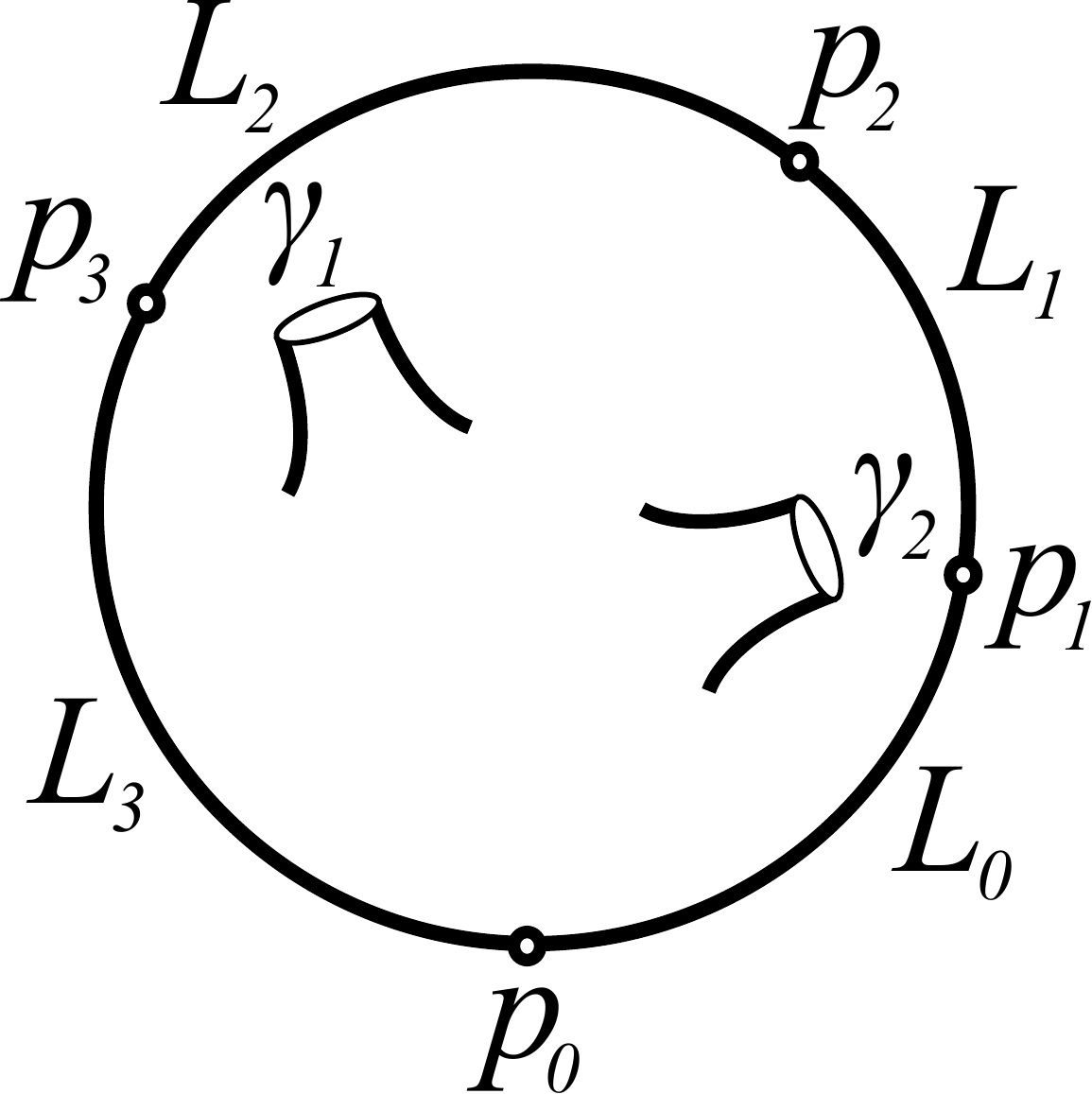}\hfill
\caption{\label{fig:linf} On the left, the domain of a pseudoholomorphic map defining the $L_\infty$ structure on $CF^*(X)$. 
We have omitted some important information from the diagram, which is the `starting point $t=0$' for each of the parametrized orbits $\gamma_i$; by keeping track of this information one can see that the algebraic operations on $CF^*(X)$ should in fact be controlled by chains on the framed little discs operad (which contains the $L_\infty$ operations as a sub-operad). 
On the right, we have illustrated the domain of a pseudoholomorphic map defining the $L_\infty$ homomorphism $\EuC\EuO: CF^*(X) \dashrightarrow CC^*(\fuk(X))$. 
Specifically, we have shown the part of the Hochschild cochain $\EuC\EuO^2(\gamma_1,\gamma_2)$ which sends $p_1 \otimes p_2 \otimes p_3 \mapsto p_0$.}
\end{center}
\end{figure}

There is an $L_\infty$ homomorphism, called the \emph{closed--open map}
\[ \EuC\EuO: CF^*(X) \dashrightarrow CC^*(\fuk(X)),\]
which is defined by counting (pseudo-)holomorphic maps with domain as in Figure \ref{fig:linf} (compare \cite[Theorem Y]{fooo}). 
This is mirror to the map $I^*$ from the previous section. 
We recall that the differential graded Lie algebra of Hochschild cochains controls the curved deformations of $\fuk(X)$ (or equivalently, $\Dfuk(X)$). 
Therefore, if $\EuC\EuO$ is a quasi-isomorphism, then the $L_\infty$ algebra $CF^*(X)$ controls the curved deformations of the Fukaya category by Theorem \ref{thm:mcqi}. 
In fact there is a general criterion for this to be the case:

\begin{thm} 
\label{thm:coiso}
(see \cite[Theorem 6]{Ganatra2016}, and \cite{Ganatra2015,Ganatra2013}) 
If $X$ is compact Calabi--Yau and $\Dfuk(X)$ is homologically smooth, then $\EuC\EuO$ is a quasi-isomorphism.
\end{thm}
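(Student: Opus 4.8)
The plan is to deduce that $\EuC\EuO$ is a quasi-isomorphism by pairing it against its companion, the \emph{open--closed map}, and using homological smoothness to upgrade the Calabi--Yau pairing on the Fukaya category to a nondegenerate duality. Write $n = \dim_\C X$ and set $\cA := \fuk(X)$. Recall from \S\ref{subsubsec:onekah} that the quasi-isomorphism $CC^*(\mathsf{D}\cA) \dashrightarrow CC^*(\cA)$ lets us pass freely between $\cA$ and $\Dfuk(X)$, so it is homological smoothness of the latter that we invoke. First I would introduce the open--closed map
\[ \EuO\EuC: HH_*(\cA) \to HF^{*+n}(X) \]
(with a degree shift by $n$), defined by counting pseudoholomorphic discs with several boundary inputs and a single interior output puncture, i.e.\ the `mirror' of the domains in Figure \ref{fig:linf}.

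Compactness of $X$ makes $\cA$ \emph{proper}: the morphism cohomologies $HF^*(L_0,L_1)$ are finite-dimensional over $\Lambda$, and $HF^*(X) \cong H^*(X;\Lambda)$ is a finite-dimensional Frobenius algebra under the Poincar\'e pairing. The Calabi--Yau condition $c_1(X)=0$ equips $\cA$ with a proper Calabi--Yau structure of dimension $n$, namely a cyclically symmetric nondegenerate pairing $HF^*(L_0,L_1) \otimes HF^{n-*}(L_1,L_0) \to \Lambda$ coming from Poincar\'e duality for Lagrangian Floer cohomology. The heart of the argument is a chain-level \emph{Cardy relation}: a moduli space of annuli, degenerating along either a `closed' or an `open' neck, produces a commuting diagram identifying $\EuO\EuC$ with the adjoint of $\EuC\EuO$ with respect to the Poincar\'e pairing on $HF^*(X)$ and the Mukai pairing on $HH_*(\cA)$,
\[ \langle \EuC\EuO(a), \xi \rangle_{\mathrm{Muk}} = \langle a, \EuO\EuC(\xi) \rangle_{\mathrm{Poin}}. \]
One also checks that both maps are morphisms of modules over the ring $HF^*(X)$ (acting on Hochschild (co)homology through $\EuC\EuO$ and the cap products), and that $\EuO\EuC$ sends the fundamental Hochschild class furnished by the Calabi--Yau structure to the unit $1 \in HF^*(X)$.

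Finally I would feed in homological smoothness. Since $\cA$ is smooth and proper, the Mukai pairing on $HH_*(\cA)$ is nondegenerate, and capping with the Calabi--Yau class is an isomorphism $HH^*(\cA) \xrightarrow{\sim} HH_{n-*}(\cA)$ — this is precisely where smoothness forces the a priori merely weak Calabi--Yau structure to be nondegenerate, the inverse dualizing bimodule being the shifted diagonal. The module structure together with the normalization $\EuO\EuC(\text{fundamental class}) = 1$ shows $\EuO\EuC$ is surjective (the unit generates $HF^*(X)$ as a module over itself); by the Cardy adjunction and nondegeneracy of the Poincar\'e and Mukai pairings, this makes $\EuC\EuO$ injective. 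A diagram chase over the three dualities — Poincar\'e on $HF^*(X)$, Mukai on $HH_*(\cA)$, and the smooth Calabi--Yau isomorphism on $HH^*(\cA)$ — then matches dimensions and pins down $\EuC\EuO$ as an isomorphism, so that $\EuC\EuO$ is an isomorphism if and only if $\EuO\EuC$ is.

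I expect the main obstacle to be the chain-level Cardy relation (the annulus degeneration): establishing the precise compatibility of $\EuC\EuO$ and $\EuO\EuC$ with the Poincar\'e and Mukai pairings, with correct signs, orientations, and Novikov weights, while simultaneously verifying that smoothness genuinely upgrades the weak Calabi--Yau pairing to a nondegenerate one. Purely algebraic smoothness does not suffice in isolation; the geometric normalization that $\EuO\EuC$ hits the unit is the subtle step that ties the algebra to the symplectic input.
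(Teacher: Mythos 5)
The paper does not actually prove this theorem --- it is quoted from Ganatra's work \cite{Ganatra2016,Ganatra2015,Ganatra2013} --- but your sketch correctly reconstructs the architecture of that cited proof: the Cardy relation from an annulus degeneration identifying $\EuC\EuO$ and $\EuO\EuC$ as adjoints with respect to the Poincar\'e and Mukai pairings, nondegeneracy of the Mukai pairing for a smooth proper category, and a dimension count using the smooth Calabi--Yau isomorphism $HH^*(\cA)\cong HH_{n-*}(\cA)$. The one point of emphasis to adjust is that ``checking that $\EuO\EuC$ sends the fundamental Hochschild class to the unit'' is not a normalization one verifies but is essentially the content of the theorem itself (it is the statement that Abouzaid's generation criterion holds automatically, extracted from the Cardy relation applied to the resolution of the diagonal) --- you rightly flag this as the subtle step in your closing paragraph.
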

 
\begin{rmk}
Roughly, a category is said to be homologically smooth if it admits a categorical `resolution of the diagonal'. It is certainly not clear when $\Dfuk(X)$ should be homologically smooth: for example, symplectic manifolds such as the Kodaira--Thurston manifold have very few interesting Lagrangian submanifolds, and one might expect that they `don't have enough Lagrangians to resolve the diagonal with'. The most efficient way to check homological smoothness of the Fukaya category seems to be via homological mirror symmetry, as we will see in the next section.
\end{rmk}

Besides constructing the differential $\ell^1$ in the $L_\infty$ structure on $CF^*(X)$, Floer also proved that its cohomology is $HF^*(X) \cong H^*(X;\Lambda)$ \cite{Floer1989a} (see also \cite{Piunikhin1996}). 
Morally, the Floer homology `localizes at constant loops': in other words, the inclusion $X \hookrightarrow \mathcal{L} X$ `induces an isomorphism on cohomology'.

In fact it is not difficult to show, using the philosophy `loop rotation acts trivially on the constant loops', that the $L_\infty$ algebra $CF^*(X)$ is homotopy abelian (compare \cite[Theorem 5.1]{Fabert2013}).\footnote{This rather simple observation is `mirror' to the much deeper Kontsevich formality theorem \cite{Kontsevich2003}.}
Therefore, when $\EuC\EuO$ is a quasi-isomorphism the versal deformation space of $\fuk(X)$ is a formal neighbourhood of $0$ in the vector space 
\[ HH^2(\Dfuk(X)) \cong H^2(X;\Lambda).\]
Thus in this situation, all deformations of the Fukaya category come from deformations of the symplectic form. 

\begin{rmk}
\label{rmk:kahlercomp}
The decomposition
\[ H^2(X) = H^{2,0}(X) \oplus H^{1,1}(X) \oplus H^{0,2}(X)\]
corresponds, under mirror symmetry of Hodge diamonds combined with Serre duality, to the decomposition
\[ HT^2(X^\circ) = H^0(\wedge^2 TX^\circ) \oplus H^1(TX^\circ) \oplus H^2(\mathcal{O}_{X^\circ}).\] 
Thus we see that deformations of $\Dcoh(X^\circ)$ `coming from complex deformations of $X^\circ$' correspond to deformations of $\Dfuk(X)$ `coming from deformations of the symplectic form that remain K\"ahler'. 
\end{rmk}

\begin{rmk}
One can study which unobstructed objects $L$ become obstructed if we deform $\fuk\left(X,\omega^\C\right)$ by deforming $\left[\omega^\C\right]$ in the direction of $\eta \in H^2(X;\Lambda)$. 
There is a hierarchy of obstruction classes \cite[Theorem C]{fooo} that must vanish if $L$ is to remain unobstructed, and the leading one can be identified with $\eta|_L \in H^2(L;\Lambda)$. 
This reflects the fact that, when we deform $\omega$ by adding a small closed $2$-form representing $\eta$, we can geometrically deform $L$ so that it remains Lagrangian if and only if $\eta|_L$ is exact.
\end{rmk}

\subsection{HMS and versality}

Let $X$ be a Calabi--Yau K\"ahler manifold with complexified K\"ahler form $\omega^\C$, and $X^\circ$ a smooth projective Calabi--Yau variety over $\Lambda$. 

\begin{defn}
We say that $\left(X,\omega^\C\right)$ and $X^\circ$ are \emph{homologically mirror} if there is a quasi-equivalence of $\Lambda$-linear $A_\infty$ categories
\[ \Dfuk\left(X,\omega^\C\right)^\bc \simeq \Dcoh(X^\circ).\]
\end{defn}

Note that $X^\circ$ has to be defined over $\Lambda$ for the definition to make sense: we can only make sense of the Fukaya category as a $\Lambda$-linear category, so HMS can only identify it with another such. 
In other words, we can only define the Fukaya category over certain $\Lambda$-points of $\mhatkah(X)$; so we can only make a precise statement of HMS which identifies it with the derived category of the fibre over the corresponding $\Lambda$-point of $\mhatcpx(X^\circ)$. 

If $\left(X,\omega^\C\right)$ and $X^\circ$ are homologically mirror, then $\Dfuk\left(X,\omega^\C\right)$ is homologically smooth because $\Dcoh(X^\circ)$ is; so Theorem \ref{thm:coiso} implies that the closed--open map is a quasi-isomorphism. 
Therefore, Strategy \ref{strat:vers} and the previous sections allow us to identify a formal neighbourhood of $0 \in H^2(X;\Lambda)$ with a formal neighbourhood of $0 \in HT^2(X^\circ)$, in a way that matches up the corresponding deformations of $\Dfuk(X,\omega^\C)$ and $\Dcoh(X^\circ)$.

Now let us summarize some unsatisfactory aspects of this implementation of Strategy \ref{strat:vers}. 
\begin{itemize}
\item We had hoped that versality would allow us to conclude something about mirror symmetry for complexified K\"ahler forms nearby $\omega^\C$. 
But the only $\Lambda$-point in a formal neighbourhood of $0 \in H^2(X;\Lambda)$ is $0$ itself, so the deformed Fukaya category doesn't `know about' any $\Lambda$-linear categories other than the original $\Dfuk(X,\omega^\C)$. In particular, versality does not allow us to upgrade to `HMS for complexified K\"ahler forms close to $\omega^\C$'.
\item It's not clear that the identification of versal deformation spaces matches up the deformations of complex structure with the K\"ahler deformations: i.e., it's not clear how to define the Hodge decomposition of Remark \ref{rmk:kahlercomp} in categorical terms. 
\item The deformed categories might have too few unobstructed objects to be `interesting': e.g., they might have no unobstructed objects at all, or they might have too few unobstructed objects to be homologically smooth.
\end{itemize}

Let us finish by expanding somewhat on the first point. 
The universal family of curved deformations of $\Dfuk(X,\omega^\C)$ over $R_v = \Lambda\power{H^2(X;\Lambda)}$ is closely related to the `bulk-deformed Fukaya category' \cite{fooo}, which is defined over $S = \Lambda \otimes_{\Lambda_0} \Lambda_0\power{H^2(X;\Lambda)}$. 
Indeed one can show that the universal family is the pullback of the bulk-deformed Fukaya category via the map induced by the inclusion $S \hookrightarrow R_v$.

We observe that $\spec (S)$ is a larger analytic domain than $\spec (R_v)$: it has $\Lambda$-points corresponding to all $p \in H^2(X;\Lambda_{>0})$, none of which lift to $\spec(R_v)$ except for $0$. 
Thus the bulk-deformed Fukaya category can be specialized to such non-trivial $\Lambda$-points, in contrast to the universal family of curved deformations.
The fibre over such a point corresponds to the Fukaya category equipped with a family of K\"ahler forms parametrized by $q \in \Lambda$, which is equal to $\omega^\C_q$ plus terms of positive valuation. 
However note that we can't get from $\omega^\C_q$ to a different $\eta_q^\C$ by adding elements with positive valuation, because $\log(q) \notin \Lambda_{>0}$. 
Therefore, even the bulk-deformed Fukaya category doesn't `know about' the Fukaya category for nearby K\"ahler forms. 

One could hope to extend the domain of definition of the Fukaya category to an even larger analytic domain than $\spec(S)$ which does `know about' the Fukaya category for nearby K\"ahler forms, using a variation of the `Fukaya trick' \cite{Fukaya:cyclic}. 
However this does not appear straightforward.

Nevertheless, supposing one could extend the parameter space of the Fukaya category in this way, one could furthermore hope to extend the domain over which HMS holds using a standard tool in deformation theory known as Artin approximation \cite{Artin}: this allows one to upgrade a formal solution of a system of analytic equations to an analytic solution. 
This can be applied to the system of analytic equations that must be satisfied by the components of an $A_\infty$ functor realizing homological mirror symmetry. 
However we note that there are infinitely many equations involved in the definition of an $A_\infty$ functor, whose solutions may have radii of convergence tending to $0$. 
Furthermore, even if it were possible to construct an $A_\infty$ functor with a finite radius of convergence, the domain of convergence may still be contained entirely within the locus corresponding to bulk deformations of the Fukaya category. 
In this case we would have a proof of HMS for certain bulk-deformed Fukaya categories, but not for any nearby K\"ahler forms.

\section{Versality at the boundary}
\label{sec:versbound}

\subsection{Versal deformation space at the large complex structure limit}

Let's consider the maximally degenerate quartic surface $\{z_1z_2z_3z_4 = 0\} \subset \CP^3$, which is isomorphic to four copies of $\CP^2$ glued along six lines. 
We will study its versal deformation space $\cM_v$.

One way to deform this variety is to deform the defining quartic equation: this part of the versal deformation space is called the \emph{smoothing locus} $\cM_{sm} \subset \cM_v$.
There also exist locally trivial deformations, which arise by deforming the gluing maps between the six lines along which the four planes are glued: this part of the versal deformation space is called the \emph{regluing locus} $\cM_{rg} \subset \cM_v$.

The existence of these two parts of the versal deformation space is a general phenomenon. 
In fact, the versal deformation space of a `$d$-semistable' maximally degenerate K3 surface is precisely $\cM_v = \cM_{sm} \cup \cM_{rg}$, and these two irreducible components meet transversely \cite[Theorem 5.10]{Friedman1983}.\footnote{We should clarify that the maximally degenerate quartic surface we have used as an example is not $d$-semistable, but becomes so after blowing up an appropriate collection of $24$ singular points \cite[Remark 1.14]{Friedman1983}}
In particular we cannot expect a versality criterion as simple as Theorem \ref{thm:vers} in this case. 

We are really only interested in the smoothing locus $\cM_{sm}$ (unless we want to study HMS for degenerate varieties). 
So we would like to modify our deformation problem so that the versal deformation space consists only of the smoothing locus. 
One way to do this is provided by \cite{Kawamata1994}, which shows that the \emph{logarithmic} versal deformation space of a $d$-semistable $K3$ surface is smooth (the regluings do not preserve the log structure so they are ruled out).
This gives us a chance to implement Strategy \ref{strat:vers}, if we only knew what logarithmic deformations corresponded to on the Fukaya category. 
Unfortunately I have no idea, so instead we will use a different modification of the deformation problem, motivated by the other side of mirror symmetry.

\subsection{Versal deformation space at the large volume limit}

We study the versal deformation space at the large volume limit, following Seidel \cite{Seidel2002,Seidel2003}.

Let $X$ be a Calabi--Yau K\"ahler manifold and $D \subset X$ be a simple normal-crossings divisor supporting an effective ample divisor as in \S \ref{subsubsec:all}. 
We have seen in \S \ref{subsubsec:atlvl} that the relative Fukaya category $\fuk(X,D)$ is a deformation of the affine Fukaya category $\fuk(X \setminus D)$ over $R_{X,D}$. 
Thus we would like to study the curved deformations of $\fuk(X \setminus D)$, which are controlled by $CC^*(\fuk(X \setminus D))$. 
We denote the versal deformation space of this category by $\cM_v$ again. 

The relative Fukaya category gives one deformation, classified by a map $\mbarhatkah(X,D) \to \cM_v$. 
One can obtain others by applying birational modifications to $X$ along $D$ to obtain a new compactification $(X',D')$ with $X' \setminus D' = X \setminus D$: these all fit together into an extended relative K\"ahler moduli space, which we call the \emph{compactification locus} $\cM_{comp}$ (see \cite[\S 6.2.2]{coxkatz} for a discussion). 
So we have a map $\cM_{comp} \to \cM_v$, through which all of the classifying maps from $\mbarhatkah(X',D')$ factor. 

However there's another way to deform the category: by deforming the exact symplectic form $\omega|_{X \setminus D}$ to a non-exact one, by analogy with \S \ref{subsec:HF}. 
Such deformations are parametrized by a formal neighbourhood of $0 \in H^2(X \setminus D)$, which we call the \emph{non-exact locus} $\cM_{ne}$. 
I expect that the compactification locus is mirror to the smoothing locus, and the non-exact locus is mirror to the regluing locus (this was initially suggested to me by Helge Ruddat). 

Analogously to \S \ref{subsec:HF}, there exists an $L_\infty$ algebra $SC^*(X \setminus D)$ which one expects should come with an $L_\infty$ morphism
\begin{equation}
\label{eqn:COSH}
 \EuC\EuO: SC^*(X \setminus D) \dashrightarrow CC^*(\fuk(X \setminus D))
\end{equation}
(see \cite{Fabert2013}), which can be hoped to be a quasi-isomorphism by analogy with work of Ganatra \cite{Ganatra2013}. 
The cochain complex $(SC^*(X \setminus D),\ell^1)$ was introduced in \cite{Floer1994} and its cohomology is called the \emph{symplectic cohomology} $SH^*(X \setminus D)$ (references include \cite{Viterbo1999,Oancea2004,Seidel2008a}). 

Once again, $SH^*(X \setminus D)$ is the semi-infinite Morse cohomology of an action functional $\mathcal{A}_H$ on the free loop space of $X$. 
However because $X \setminus D$ is non-compact, there are various growth conditions one can put on the Hamiltonian $H_t$ at infinity, giving rise to different versions of symplectic cohomology. 
The standard condition, and the one for which \eqref{eqn:COSH} turns out to be a quasi-isomorphism in examples, is that $H_t$ should `go to $+\infty$ sufficiently rapidly near $D$'. 
As a consequence, $SH^*(X \setminus D)$ no longer `localizes at constant loops', because there will exist orbits $\gamma$ of $X_{H_t}$ linking $D$.  
In fact there is a short exact sequence of cochain complexes
\begin{equation}
\label{eqn:ses}
 0 \to C^*(X \setminus D) \to SC^*(X \setminus D) \to SC^*_+(X \setminus D) \to 0,
 \end{equation}
where $C^*(X \setminus D)$ corresponds to the `constant loops' and $SC^*_+(X \setminus D)$ to loops `linking $D$' \cite{Bourgeois2009a}.

Let us assume that the closed--open map is an isomorphism, so $HH^2(\fuk(X \setminus D)) \cong SH^2(X \setminus D)$. 
The short exact sequence \eqref{eqn:ses} induces a long exact sequence
\begin{equation}
\label{eqn:les} \ldots \to H^2(X \setminus D) \to SH^2(X \setminus D) \to SH^2_+(X \setminus D) \to \ldots .
\end{equation}
We have seen that the versal deformation space sits inside a formal neighbourhood of $0 \in HH^2(\fuk(X \setminus D)) \cong SH^2(X \setminus D)$, so its tangent space is $T\cM_v \cong SH^2(X \setminus D)$. 

This allows us to see the two components of our deformation space we mentioned above. 
The derivative of the map $\cM_{ne} \to \cM_v$ is identified with the inclusion $H^2(X \setminus D) \hookrightarrow SH^2(X \setminus D)$. 
The derivative of the map $\cM_{comp} \to \cM_v$ can be composed with a map from the long exact sequence \eqref{eqn:les} to give 
\begin{equation}
\label{eqn:derivkah}
 T \cM_{comp} \to T\cM_v = SH^2(X \setminus D) \to SH^2_+(X \setminus D),
\end{equation}
which can be interpreted as follows. 
Firstly, under our assumption that $X$ is Calabi--Yau and some mild hypotheses on $D$, $SH^2_+(X \setminus D)$ has one generator for each irreducible component of $D$, corresponding to an orbit $\gamma_p$ linking that component (compare, e.g., \cite{Ganatra2016a}). 
Deforming the relative K\"ahler class in the direction of $PD(D_p)$ corresponds, under the map \eqref{eqn:derivkah}, to moving in the direction of $\gamma_p$ (Figure \ref{fig:div} is supposed to suggest why this is true).

\begin{figure}
\begin{center}
\includegraphics{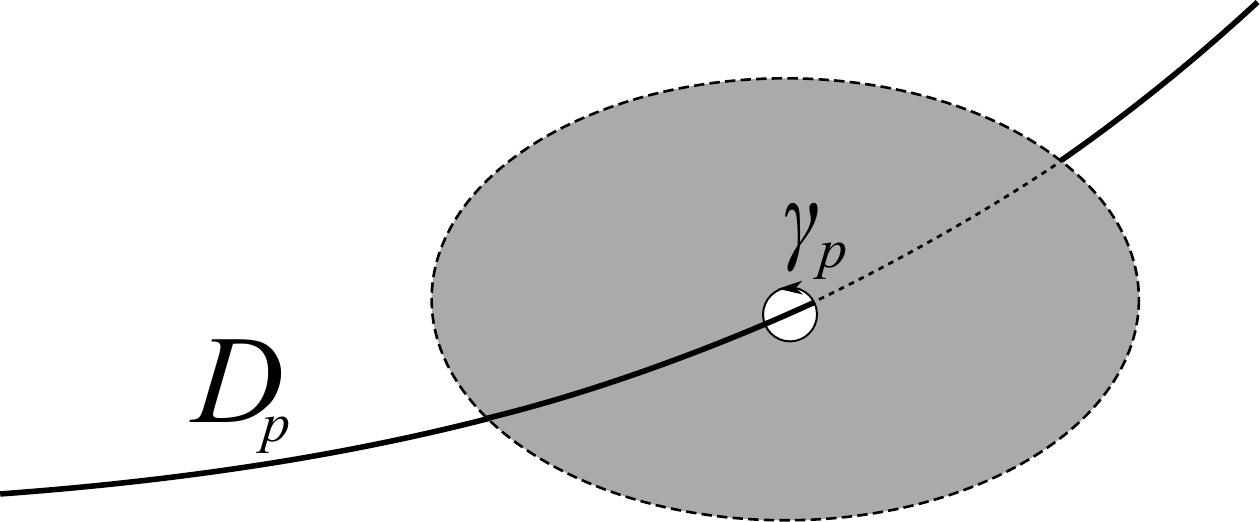}
\caption{\label{fig:div} Holomorphic maps (in grey) with an asymptotic condition along the orbit $\gamma_p$ linking $D_p$, correspond to holomorphic maps with an intersection point with $D_p$.}
\end{center}
\end{figure}

Thus we see that the tangent spaces of $\cM_{comp}$ and $\cM_{ne}$ span the tangent space of $\cM_v$ at the origin. 
It could still be the case that the versal deformation space is smooth, but we can give a heuristic argument indicating that this is not the case in general. 
Namely, if we deform along $\cM_{ne}$ to a K\"ahler form $\eta$ whose cohomology class $[\eta] \in H^2(X \setminus D)$ does not lie in the image of the restriction map $H^2(X) \to H^2(X \setminus D)$, then we can no longer deform in the compactification direction: so the dimension of the tangent space drops when we move in this direction. 

\subsection{Cutting down the deformations}
\label{subsec:cut}

We have seen that our respective versal deformation spaces have additional components (the regluing and non-exact loci, respectively), which are an obstruction to applying Theorem \ref{thm:ainfvers}. 
One solution to this problem would be to prove a better versality theorem; another would be to eliminate the extra components by modifying our deformation problem. 
No doubt there's more than one way to do this, but we will choose to do it by imposing an additional equivariance constraint (as was done in \cite{Seidel2003}, although our equivariance will be with respect to anti-symplectic rather than symplectic symmetries).
 
Namely, we suppose that $(X,D)$ is equipped with a real structure. 
This induces an anti-holomorphic involution $\iota: X \to X$, preserving $D$ as a set. 
It acts on cohomology by
\begin{align*}
\iota^*: H^*(X) & \to H^*(X), \quad \text{ sending} \\
PD(D_p) & \mapsto -PD(D_p)
\end{align*}
for any component $D_p$ of $D$ (because it reverses orientation on the normal bundle to $D_p$). 
In particular it acts on the cohomology class of any relative K\"ahler form $[\omega^\C;\theta^\C]$ by $-1$. 

Let us suppose that $\iota^*: H^2(X \setminus D) \to H^2(X \setminus D)$ acts by the identity. 
Then it acts by $+1$ on the tangent space to $\cM_{ne}$, but $-1$ on the tangent space to $\cM_{comp}$. 
Therefore if we restrict to `$\iota$-anti-invariant' deformations in the appropriate sense, the versal deformation space will consist only of the desired component $\cM_{comp}$.

\begin{example}
\label{eg:Batdiv}
Let $X \subset V$ be a hypersurface in a toric variety as in Batyrev's mirror construction, and let $D \subset X$ be its transverse intersection with the toric boundary divisor $\partial V$. 
If the defining equation of $X$ is real, then the real structure on $(V,\partial V)$ descends to one on $(X,D)$. 
Complex conjugation acts by the identity on $H^2(V \setminus \partial V)$, so if the restriction map
\[ H^2(V \setminus \partial V) \to H^2(X \setminus D)\]
is surjective, then it also acts by the identity on $H^2(X \setminus D)$. 
This is the case as soon as $X$ has dimension $\ge 3$, by the affine Lefschetz hyperplane theorem \cite[Theorem 6.5]{Dimca1992}.
\end{example}

Whereas a holomorphic involution induces an involution on the Fukaya category (an endofunctor squaring to the identity), an \emph{anti-}holomorphic involution $\iota$ induces a functor from the Fukaya category to its \emph{opposite} category:
\begin{equation}
\label{eqn:duality}
\mathcal{D}: \fuk(X \setminus D) \xrightarrow{\sim} \fuk(X \setminus D)^{op},
\end{equation}
satisfying $\mathcal{D}^{op} \circ \mathcal{D} = \id$. 
This observation is due to \cite{Castano2010}, where such functors are called \emph{dualities}. 
The name is motivated by the other side of mirror symmetry, where a natural duality is the functor
\begin{equation}
\label{eqn:bduality}
\mathsf{R} \Hom(-,\mathcal{O}_{X^\circ}): \Dcoh (X^\circ) \xrightarrow{\sim} \Dcoh (X^\circ)^{op} 
\end{equation}

The duality $\mathcal{D}$ manifestly extends to the relative Fukaya category, so we modify our deformation problem to consider only such deformations. 
This modified deformation problem is controlled by a sub-dg Lie algebra of $CC^*(\fuk(X \setminus D))$, which is the invariant subspace of the involution induced by the functor \eqref{eqn:duality}. 

With this modification, there is a chance that the versal deformation space at the large volume limit point is realized by the compactification locus. 
Therefore we have a chance to implement Strategy \ref{strat:vers}, which is what we will do in the next section. 

\begin{rmk}
Observe that any relative K\"ahler form has $[\omega^\C] = \sum_k \lambda_k \cdot PD(D_k) \in H^{1,1}(X)$. 
Comparing with Remark \ref{rmk:kahlercomp}, we notice another advantage of our current setup: by moving within the compactification locus we avoid non-K\"ahler deformations of the symplectic form, which correspond to non-commutative deformations of the mirror.
\end{rmk}

\section{Proving mirror symmetry}
\label{sec:provehms}

In this section we summarize the implementation of Strategy \ref{strat:vers} suggested by the arguments of \S \ref{sec:versbound}. 

\subsection{Setup}

Let $X$ be Calabi--Yau K\"ahler, $D = \cup_{p \in P} D_p \subset X$ simple normal-crossings such that $(X,D)$ admits a real structure, and $(\omega^\C,\theta^\C)$ a complexified relative K\"ahler form. 
Recall that we denote 
\[\mbarhatkah(X,D) := \spec(R_{X,D}),\]
 and the complexified relative K\"ahler form determines a $\Lambda$-point $p$ of $\mbarhatkah(X,D)$.

Let $\EuX^\circ$ be a scheme over $R := \C\power{\nov_1,\ldots,\nov_k}$. 
We denote 
\[\mbarhatcpx(X^\circ) := \spec(R),\]
and let $X^\circ_p$ denote the fibre of $\EuX^\circ$ over a $\Lambda$-point $p$ of $\mbarhatcpx(X^\circ)$.

Suppose that we are trying to prove a version of HMS of the form:

\begin{conj}
\label{conj:hmsvers}
There exists a map $\Psi: \mbarhatkah(X,D) \to \mbarhatcpx(X^\circ)$ such that there is a quasi-equivalence
\[  \Dcoh(X^\circ_{\Psi(p)}) \simeq \Dfuk\left(X,\omega^\C\right)^\bc.\]
\end{conj}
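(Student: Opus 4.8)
The plan is to run Strategy \ref{strat:vers} at the limit by invoking Theorem \ref{thm:ainfvers}, with the \emph{coherent} side as the versal family $\cB$ and the relative Fukaya category as the target $\cA$. Concretely, take $\cB$ to be a generating subcategory of $\Dcoh(\EuX^\circ)$ (e.g.\ a tilting-type collection of sheaves), a deformation of $\cB_0 := \Dcoh(X^\circ_0)$ over the \emph{smooth} power series ring $R = \C\power{\nov_1,\ldots,\nov_k}$, and take $\cA := \fuk(X,D)$ restricted to the mirror Lagrangian generators, which by \S\ref{subsubsec:atlvl} is a curved deformation of $\cA_0 := \fuk(X\setminus D)$ over $S := R_{X,D}$. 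This assignment is forced by the hypotheses of Theorem \ref{thm:ainfvers}, which require the base $R$ of $\cB$ to be a power series ring but allow $S$ to be an arbitrary complete Noetherian local algebra; this is exactly what we need, since $R_{X,D}$ is typically singular at the large volume limit. Since $CC^*(\mathsf{D}\cA)\dashrightarrow CC^*(\cA)$ is a quasi-isomorphism, it suffices to work with these generating subcategories and pass to split-closed derived categories only at the very end.

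First I would establish homological mirror symmetry \emph{at the limit}: a quasi-equivalence $\Dcoh(X^\circ_0)\simeq\Dfuk(X\setminus D)$. Because Theorem \ref{thm:ainfvers} demands an $A_\infty$ \emph{isomorphism} $\cB_0\dashrightarrow\cA_0$ rather than a mere quasi-equivalence (Remark \ref{rmk:ainfiso}), I would compare the endomorphism $A_\infty$ algebras of the chosen generators on each side and, working over the field $\C$, promote the resulting quasi-equivalence of these finite models to an $A_\infty$ isomorphism by passing to minimal models. This is where the genuine enumerative geometry lives, and is precisely the computation that the Lagrangian-skeleton/microlocal-sheaf approach of \S\ref{sec:skel} is designed to carry out.

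Next I would verify the versality hypothesis, the surjectivity of $KS_\cB:(\fm_R/\fm_R^2)^*\to HH^2(\cB_0)$, which by Remark \ref{rmk:enumless} is the only remaining (first-order) enumeration. As explained in \S\ref{subsec:cut}, the full $HH^2(\fuk(X\setminus D))\cong SH^2(X\setminus D)$ also contains the non-exact directions $H^2(X\setminus D)$, which the smoothing family $\EuX^\circ$ does not realize, so $KS_\cB$ cannot be surjective onto the full $HH^2$. I would therefore cut the deformation problem down to the duality-equivariant part, using the anti-holomorphic involution on the $A$-side (inducing $\mathcal{D}$ as in \eqref{eqn:duality}) and $\mathsf{R}\Hom(-,\mathcal{O}_{X^\circ})$ on the $B$-side as in \eqref{eqn:bduality}: passing to the $\mathcal{D}$-invariant sub-dg-Lie-algebra of $CC^*$ kills $\cM_{ne}$ (on which $\iota^*$ acts by $+1$) and retains only the compactification/smoothing directions (on which $\iota^*$ acts by $-1$). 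Within this cut-down problem, I would check via the long exact sequence \eqref{eqn:les} and the monomial--divisor correspondence (Example \ref{eg:Bat2}) that the $k$ smoothing parameters match, one for each component $D_p$, the generators of the invariant $HH^2$, so that the equivariant Kodaira--Spencer map is an isomorphism. Here I would apply the evident equivariant refinement of Theorem \ref{thm:ainfvers}, proved identically by applying Theorem \ref{thm:vers} to the $\mathcal{D}$-invariant sub-dg-Lie-algebra.

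Granting these inputs, the theorem yields a classifying map $\Psi^*:R\to R_{X,D}$, i.e.\ $\Psi:\mbarhatkah(X,D)\to\mbarhatcpx(X^\circ)$, together with an $A_\infty$ quasi-embedding $(\Psi^*\cB)^\bc\hookrightarrow\cA^\bc$. I would then upgrade this to a quasi-equivalence by a split-generation argument (the embedding is fully faithful and its image contains split-generators, since these are matched at the limit), specialize along the $\Lambda$-point $p^*:R_{X,D}\to\Lambda$, and use the bounding-cochain version of the embedding \eqref{eqn:embrelcomp} from \S\ref{subsubsec:B} --- applying the reverse Liouville flow so that the relevant bounding cochains retain positive energy --- to obtain, after split-closure, the desired $\Lambda$-linear equivalence $\Dcoh(X^\circ_{\Psi(p)})\simeq\Dfuk(X,\omega^\C)^\bc$. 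I expect the main obstacle to be the base case, the limit equivalence of the second paragraph, which conceals the full count of holomorphic discs; the surrounding versality machinery reduces everything else to the finite first-order check above. A secondary, more structural difficulty --- flagged in \S\ref{sec:versbound} as only \emph{expected} to hold --- is confirming that the equivariant cut-down genuinely eliminates the non-exact locus and leaves a versal (ideally smooth) problem, so that $KS_\cB$ is an isomorphism rather than merely surjective.
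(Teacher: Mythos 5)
The statement you are addressing is stated as a conjecture in the paper; what the paper actually supplies is a conditional strategy (\S\ref{sec:provehms}), and your proposal reproduces that strategy faithfully up to the last step: the same choice of $\cA$ and $\cB$, the same hypothesis of an $A_\infty$ isomorphism $\cB_0 \dashrightarrow \cA_0$ at the limit (with the minimal-model fix of Remark \ref{rmk:rat1}), the same duality-equivariant cut-down of the deformation problem from \S\ref{subsec:cut}, and the same application of Theorem \ref{thm:ainfvers} to produce $\Psi^*$ and the quasi-embedding $(\Psi^*\cB)^\bc \hookrightarrow \cA^\bc$ (this is Theorem \ref{thm:msvers}).

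The gap is in your one-sentence split-generation argument: ``the embedding is fully faithful and its image contains split-generators, since these are matched at the limit.'' This conflates two separate, nontrivial steps. First, on the $A$-side, split-generation of $\Dfuk\left(X,\omega^\C\right)^\bc$ by the images of finitely many Lagrangians cannot be read off from the limit; the paper invokes Ganatra's automatic generation theorem (Theorem \ref{thm:autgen}, built on Abouzaid's criterion), whose \emph{input} is split-generation on the $B$-side. Second, and more seriously, $\cB_{\Psi(p)}$ split-generates $\Dcoh(X^\circ_{\Psi(p)})$ only when $X^\circ_{\Psi(p)}$ is smooth, and smoothness is a statement about the mirror map $\Psi$ that the basic versality theorem does not control: if $\Psi$ sent all of $\mbarhatkah(X,D)$ to the large complex structure limit point, the fibre would never be smooth. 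Example \ref{eg:GGP} is explicit that even after verifying that all the maps in \eqref{eqn:comm} are isomorphisms, Theorem \ref{thm:msvers} was not strong enough in the Greene--Plesser case, and the more elaborate versality theorem of \cite{Sheridan2017} (exploiting the torus action on $V^\circ$, the shape of the K\"ahler cone of $V$, and a finite symmetry group of $(X,D)$) was needed to pin down the valuations of $\Psi^*(\nov_k)$ and deduce smoothness via the secondary fan. Your proposal names the limit equivalence and the equivariant Kodaira--Spencer computation as the two obstacles but omits this third one, which is exactly where the argument as you have written it fails to close.
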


\begin{example}
\label{eg:Batvers}
Batyrev mirror symmetry fits into this setup (Example \ref{eg:Batyrev}). 
In this case $X$ is a hypersurface in a toric variety $V$, and $D$ is the intersection with the toric boundary. 
The components of $D$ can be indexed by a certain set $\Xi$ of boundary lattice points of the reflexive polytope $\Delta^\circ$, so $R_{X,D}$ is a power series ring in certain monomials in variables $\{\nov_k\}_{k \in \Xi}$ (more precisely this is the `ambient' coordinate ring, compare \cite{Sheridan2017a}). 
On the mirror, the lattice points $k \in \Delta^\circ$ index a monomial basis $\{\chi^k\}$ of the anticanonical bundle of the mirror toric variety $V^\circ$, so we can take $\EuX^\circ$ to be the family of hypersurfaces in $V^\circ$ defined by
\[ \chi^0 = \sum_{k \in \Xi} \nov_k \cdot \chi^k.\]
It is defined over $R := \C\power{\nov_k}_{k \in \Xi}$. 
\end{example}

\subsection{Applying versality at the boundary}
\label{subsec:applyvers}

Let $\cA \subset \fuk(X,D)$ be a subcategory of the relative Fukaya category, preserved by the duality $\mathcal{D} : \fuk(X,D) \to \fuk(X,D)^{op}$ induced by the real structure (so $\cA$ has a duality). 

Let $\cB \subset \Dcoh(\EuX^\circ)$ be a subcategory of an appropriate DG enhancement for the derived category, preserved by the duality \eqref{eqn:bduality}. 

We now have an $R_{X,D}$-linear $A_\infty$ category $\cA$, and an $R$-linear DG category $\cB$, both equipped with dualities. 
We denote the respective quotients by the maximal ideals by $\cA_0$ and $\cB_0$. 
We will suppose that we have `proved HMS at the limit points': there is an $A_\infty$ isomorphism $\cB_0 \dashrightarrow \cA_0$ which preserves dualities.\footnote{To the reader who smells a rat already, we'll come clean in Remarks \ref{rmk:rat1} and \ref{rmk:rat2}.}

Secondly we will assume that `$\cB$ is versal among duality-preserving deformations': if $\fg \subset CC^*(\cB_0)$ is the invariant subspace of the involution induced by the duality, then the Kodaira--Spencer map associated to the deformation $\cB$
\begin{equation}
\label{eqn:KSB}
 KS_\cB: (\fm/\fm^2)^* \to H^1(\fg)
\end{equation}
is an isomorphism. 

Under these assumptions, we have:

\begin{thm}
\label{thm:msvers}
There exists a map $\Psi^*: R \to R_{X,D}$ together with a quasi-embedding of $A_\infty$ categories 
\[\Psi^*\cB \hookrightarrow \cA^\bc.\] 
\end{thm}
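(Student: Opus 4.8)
The plan is to prove this as the duality-equivariant analogue of Theorem~\ref{thm:ainfvers}: the two standing assumptions supply exactly the inputs needed for that theorem, but with the Hochschild cochains replaced throughout by their duality-invariant part. To set up, I would first use \cite[\S 1c]{Seidel2008} to replace the given duality-preserving isomorphism $\cB_0 \dashrightarrow \cA_0$ by a \emph{strict} one, so that $\cA_0 = \cB_0$ on the nose. Since the duality $\mathcal{D}$ on the relative Fukaya category extends the one on $\cA_0 = \fuk(X \setminus D)$ (and similarly on the $B$-side), both $\cA$ and $\cB$ are then duality-preserving curved deformations of this single category. By the discussion of \S\ref{subsec:cut} such deformations are governed not by all of $CC^*(\cB_0)$ but by the sub-dg Lie algebra $\fg \subset CC^*(\cB_0)$ of $\mathcal{D}$-invariant cochains, so I would record them as Maurer--Cartan elements $\alpha_\cA \in MC_\fg(\fm_{R_{X,D}})$ and $\alpha_\cB \in MC_\fg(\fm_R)$.

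The versality step is then formal. Theorem~\ref{thm:vers} is stated for an arbitrary $L_\infty$ algebra, so it applies verbatim to $\fg$. The base $R = \C\power{\nov_1,\ldots,\nov_k}$ is a formal power series algebra, and the hypothesis that $KS_\cB:(\fm/\fm^2)^* \to H^1(\fg)$ is an isomorphism is exactly the versality criterion; hence $\alpha_\cB$ is versal and, a fortiori, complete. Completeness applied to $\alpha_\cA$ then yields a classifying homomorphism $\Psi^*: R \to R_{X,D}$ together with a gauge equivalence $\alpha_\cA \sim \Psi^* \alpha_\cB$ in $MC_\fg(\fm_{R_{X,D}})$.

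It remains to convert this back into a functor and pass to bounding cochains, following the proof of Theorem~\ref{thm:ainfvers}. The gauge equivalence produces a duality-preserving curved $A_\infty$ functor $F: \Psi^*\cB \dashrightarrow \cA$ whose reduction $F/\fm$ is the identity isomorphism of $\cA_0 = \cB_0$. Applying $(-)^\bc$ gives $F^\bc: (\Psi^*\cB)^\bc \dashrightarrow \cA^\bc$, which is a quasi-embedding because $F/\fm$ is one, by the spectral-sequence comparison of \S\ref{subsec:curvedainf}. Finally, since $\cB \subset \Dcoh(\EuX^\circ)$ is \emph{uncurved}, so is $\Psi^*\cB$, and it includes into $(\Psi^*\cB)^\bc$ as the objects carrying the zero bounding cochain; composing with $F^\bc$ gives the desired quasi-embedding $\Psi^*\cB \hookrightarrow \cA^\bc$.

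The genuinely delicate point, and the one I expect to consume most of the argument, is the equivariance bookkeeping underlying the first paragraph: one must verify that $\mathcal{D}$ induces a filtered dg Lie (hence $L_\infty$) involution of $CC^*(\cB_0)$, so that $\fg$ is a genuine sub-$L_\infty$ algebra with finite-dimensional cohomology to which Theorem~\ref{thm:vers} applies, and that the strictification of the order-zero isomorphism, the gauge flows realizing completeness, and the functor $F$ can all be taken $\mathcal{D}$-equivariantly. Granting this, the machinery is purely formal; all the geometric content sits in the two starred hypotheses (HMS at the limit point and surjectivity of $KS_\cB$), and---as the footnote to the setup warns---it is the first of these that should be regarded with the most suspicion.
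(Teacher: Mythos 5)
Your proposal is correct and follows essentially the same route as the paper: the paper's proof simply invokes Theorem \ref{thm:ainfvers} ``modified to take the dualities into account'' to get a quasi-embedding $(\Psi^*\cB)^\bc \hookrightarrow \cA^\bc$, and then observes that $\Psi^*\cB$, being uncurved, sits inside $(\Psi^*\cB)^\bc$ via zero bounding cochains. Your version merely unwinds the duality-equivariant modification explicitly (replacing $CC^*(\cB_0)$ by its $\mathcal{D}$-invariant part $\fg$ and re-running Theorems \ref{thm:vers} and \ref{thm:ainfvers}), and correctly identifies the equivariance bookkeeping as the point the paper leaves implicit.
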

\begin{proof}
By Theorem \ref{thm:ainfvers} (modified to take the dualities into account), we have a quasi-embedding
\[ \Psi^*\cB^\bc \hookrightarrow \cA^\bc.\]
Since $\Psi^*\cB$ is uncurved, it is a full subcategory of $\Psi^*\cB^\bc$ (corresponding to equipping each object with the zero bounding cochain). 
Thus we have an embedding $\Psi^* \cB \hookrightarrow \cA^\bc$ as required.
\end{proof}

\begin{rmk}
\label{rmk:rat1}
The hypothesis that $\cA_0$ and $\cB_0$ are $A_\infty$ isomorphic (rather than quasi-equivalent), which is imposed by Theorem \ref{thm:ainfvers} (see Remark \ref{rmk:ainfiso}), will never, ever be satisfied. 
The reason is that morphism spaces in the Fukaya category can be chosen to be finite-dimensional on the cochain level, whereas those in the derived category are typically infinite-dimensional (although they may have finite-dimensional cohomology). 
One can fix this by passing to quasi-isomorphic models for $\cA$ and $\cB$, although one needs to be a little careful about what one means since $A_\infty$ quasi-isomorphisms over a ring like $R$ need not be invertible up to homotopy, and minimal models need not exist (in contrast to the case over a field). 
In examples, one can check the existence of appropriate minimal models using ad-hoc methods \cite{Sheridan2015,Sheridan2017a}.
\end{rmk}

\begin{rmk}
\label{rmk:rat2}
Identifying $\cA_0$ with $\cB_0$ is only part of `proving HMS at the limit points'. 
However it's sufficient for our purposes (compare \S \ref{sec:skel}).
\end{rmk}

\subsection{Automatic generation}
\label{subsec:gen}

The map $\Psi^*$ provided by Theorem \ref{thm:msvers} can be interpreted as the pullback of functions via the mirror map 
\[\Psi: \mbarhatkah(X,D) \to \mbarhatcpx(X^\circ).\] 
Taking fibres, we obtain a quasi-embedding $\cB_{\Psi(p)} \hookrightarrow \cA_p^\bc$. 
This allows us to identify quasi-embeddings
\[\xymatrix{ \Dcoh(X^\circ_{\Psi(p)}) & \Dfuk\left(X,\omega^\C\right)^\bc \\
\cB_{\Psi(p)} \ar@{^{(}->}[u] \ar@{^{(}->}[r]^\sim & \cA_p^\bc. \ar@{^{(}->}[u]}\] 
We now seek to show that these subcategories (split-)generate, so that the identification extends to the categories upstairs (see \cite[\S 4]{Seidel2008}). 
Abouzaid gives an extremely useful criterion for a subcategory to split-generate the Fukaya category \cite{Abouzaid2010a}, and there are results showing that his criterion is automatically satisfied under certain conditions, which dramatically reduce the amount of work required \cite{Perutz2015,Ganatra2016}:

\begin{thm} (Ganatra \cite{Ganatra2016}) 
\label{thm:autgen}
If $\cB_{\Psi(p)}$ split-generates $\Dcoh(X^\circ_{\Psi(p)})$, then its image split-generates $\Dfuk\left(X,\omega^\C\right)^\bc$. 
In particular we have a quasi-equivalence
\[ \Dcoh\left(X^\circ_{\Psi(p)}\right) \simeq \Dfuk\left(X,\omega^\C\right)^\bc,\]
so $\left(X,\omega^\C\right)$ and $X^\circ_{\Psi(p)}$ are homologically mirror.
\end{thm}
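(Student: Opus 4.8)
The plan is to verify Abouzaid's split-generation criterion \cite{Abouzaid2010a} for the image subcategory, with Ganatra's machinery \cite{Ganatra2016} doing the work of checking that the criterion holds automatically. First I would record what the hypothesis buys us. By \S\ref{subsec:applyvers} the image of $\cB_{\Psi(p)}$ in $\Dfuk\left(X,\omega^\C\right)^\bc$ is the subcategory $\cA_p^\bc$, and the versality argument supplied a quasi-equivalence $\cB_{\Psi(p)} \simeq \cA_p^\bc$. If $\cB_{\Psi(p)}$ split-generates $\Dcoh(X^\circ_{\Psi(p)})$, then $\cA_p^\bc$ is Morita-equivalent to $\Dcoh(X^\circ_{\Psi(p)})$; in particular $\cA_p^\bc$ is homologically smooth and proper (indeed Calabi--Yau), and its Hochschild invariants agree with those of $\Dcoh(X^\circ_{\Psi(p)})$. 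This is the crucial structural input: the subcategory we are trying to show generates is known \emph{abstractly} to be a smooth, proper Calabi--Yau category of the expected dimension.

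Next I would reduce split-generation to a statement about the open--closed map. Abouzaid's criterion asserts that $\cA_p^\bc$ split-generates $\Dfuk\left(X,\omega^\C\right)^\bc$ provided the unit $1 \in QH^*(X) \cong HF^*(X)$ lies in the image of the open--closed map $\mathcal{OC}: HH_*(\cA_p^\bc) \to QH^*(X)$. So the whole problem becomes to show $1 \in \operatorname{Im}(\mathcal{OC})$. Here Ganatra's contribution enters. The open--closed map $\mathcal{OC}$ and the closed--open map $\EuC\EuO$ of \S\ref{subsec:HF} are adjoint with respect to the Poincar\'e pairing on $QH^*(X)$ and the Mukai pairing on $HH_*(\cA_p^\bc)$ (the Cardy relation). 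For a homologically smooth and proper Calabi--Yau category these pairings are non-degenerate, and smoothness provides, via the inverse dualizing bimodule, a distinguished class in $HH_*(\cA_p^\bc)$ whose image under $\mathcal{OC}$ is computed by the Cardy relation to be the unit. Thus Abouzaid's criterion is satisfied automatically and $\cA_p^\bc$ split-generates.

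The main obstacle is exactly this last step: establishing the Cardy-type duality between $\mathcal{OC}$ and $\EuC\EuO$ precisely enough to deduce that smoothness forces $1 \in \operatorname{Im}(\mathcal{OC})$. This is the technical heart of \cite{Ganatra2016} (building on \cite{Ganatra2015,Ganatra2013}), and it is what underlies Theorem \ref{thm:coiso} as well. Beyond the chain-level construction of both maps and the compatibility of their pairings, one must contend with the fact that everything takes place over the Novikov field $\Lambda$ rather than a characteristic-zero ground field, and that the relevant objects carry bounding cochains, so the Floer-theoretic operations have to be set up on the curved/$\bc$ category; these are precisely the points handled in Ganatra's work.

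Finally I would assemble the conclusion. Once $\cA_p^\bc$ is known to split-generate $\Dfuk\left(X,\omega^\C\right)^\bc$, the split-closed derived category of $\cA_p^\bc$ is all of $\Dfuk\left(X,\omega^\C\right)^\bc$, while by hypothesis the split-closed derived category of $\cB_{\Psi(p)}$ is $\Dcoh(X^\circ_{\Psi(p)})$. Since idempotent completion and triangulated closure are functorial for $A_\infty$ quasi-equivalences, the quasi-equivalence $\cB_{\Psi(p)} \simeq \cA_p^\bc$ extends to a quasi-equivalence
\[ \Dcoh\left(X^\circ_{\Psi(p)}\right) \simeq \Dfuk\left(X,\omega^\C\right)^\bc,\]
which is the asserted homological mirror symmetry.
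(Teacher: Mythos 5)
The paper does not prove this statement---it is quoted as a theorem of Ganatra \cite{Ganatra2016}, with the surrounding text indicating exactly the route you describe (Abouzaid's split-generation criterion \cite{Abouzaid2010a} being satisfied automatically for a homologically smooth subcategory). Your outline is a correct and faithful account of that argument: smoothness and properness of $\cA_p^\bc$ inherited via Morita equivalence from $\Dcoh(X^\circ_{\Psi(p)})$, the Cardy relation forcing the unit into the image of the open--closed map, and split-closure transporting the equivalence of subcategories to the full categories.
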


In the case of Batyrev mirrors (Example \ref{eg:Batvers}) we might, for example, choose $\cB$ to be the full subcategory of derived restrictions of some split-generators of $\Dcoh(V^\circ)$. 
These split-generate whenever $X^\circ_{\Psi(p)}$ is smooth (see \cite[Lemma 5.4]{Seidel2003}), so in order to apply Theorem \ref{thm:autgen} in this case it would suffice for us to check this smoothness.
In order to determine something about the smoothness of $X^\circ_{\Psi(p)}$, we need to know something about $\Psi$: for example, if $\Psi$ sent all of $\mbarhatkah(X,D)$ to the large volume limit point then $X^\circ_{\Psi(p)}$ would never be smooth so we would never be able to apply Theorem \ref{thm:autgen}. 

We can gain some control over $\Psi$ by making a versality-type assumption on the Fukaya category: a natural such assumption is that all of the maps
\begin{equation}
\label{eqn:comm}
\xymatrix{SH^2_+(X \setminus D) \ar[r]^-{\EuC\EuO} & H^1(\fg) \ar@{<->}[d]^-\cong \\
\left(\fm/\fm^2\right)^* \ar[r]^-{KS_\cB} & H^1(\fg)}
\end{equation} 
are isomorphisms. 
Although this rules out pathologies like $\Psi$ being constant, in general it is still not strong enough to allow us to guarantee that $X^\circ_{\Psi(p)}$ is smooth, as the following example shows.

\begin{example}
\label{eg:GGP}
The strategy we describe has been implemented in \cite{Sheridan2017a} to prove HMS (in the form of Conjecture \ref{conj:hmsvers}) for Batyrev mirrors in the case that $\Delta$ is a simplex (also known as Greene--Plesser mirrors \cite{Greene1990}), as well as certain generalizations within the Batyrev--Borisov framework \cite{Batyrev1994}. 
It turned out that, in order to get sufficient control over $\Psi$ to establish smoothness of $X^\circ_{\Psi(p)}$, Theorem \ref{thm:msvers} was not quite strong enough. 
After establishing that the maps in \eqref{eqn:comm} were all isomorphisms, we had to apply the more elaborate versality theorem proved in \cite{Sheridan2017}, which takes advantage of certain extra structures present in this case (the algebraic torus action on the toric variety $V^\circ$ containing $X^\circ$, which is mirror to the `anchored Lagrangian Floer theory' in $(X,D)$ \cite{Fukaya2010a}, some convenient facts about the shape of the K\"ahler cone of the toric variety $V$ containing $X$, and a certain finite symmetry group of $(X,D)$). 
This gave us sufficient information about the valuations of $\Psi^*(\nov_k)$ to establish smoothness of $X^\circ_{\Psi(p)}$ using the mirror relationship between the K\"ahler cone of $X$ and the secondary fan associated to $\Xi$ (see \cite[\S 6]{coxkatz} for this relationship, and \cite{Gelfand1994,Oda1991,Dickenstein2007} for background).
\end{example}

\begin{rmk}
Example \ref{eg:GGP} explains one reason why the more elaborate versality criterion of \cite{Sheridan2017} is necessary: it gives us important extra control over the mirror map $\Psi$. 
Another reason is that it applies even if the complex moduli space is not smooth at the large complex structure limit point. 
This smoothness was built into our assumptions, since we assumed $R$ was a formal power series ring, but in general the large complex structure limit point can be more complicated.  
\end{rmk}

\begin{rmk}
Although we assume $X$ to be Calabi--Yau in this paper, HMS also makes sense if $X$ is Fano \cite{Kontsevich1998,Seidel2001,Seidel2001b} or of general type \cite{Kapustin2010,Seidel2008a}. 
The versality criterion of \cite{Sheridan2017} assumes that $-K_X$ is nef, and in particular can be applied in Calabi--Yau or Fano situations.
\end{rmk}

\subsection{Closed-string mirror symmetry}
\label{subsec:hmsimplies}

It remains to make the connection back to closed-string mirror symmetry. 
Closed-string mirror symmetry can be formulated in this setting as an isomorphism of variations of Hodge structures $V^A(X) \cong \Psi^* V^B(X^\circ)$, where $\Psi: \mhatkah(X,D) \to \mhatcpx(X^\circ)$ is the mirror map. 
One of the features of closed-string mirror symmetry is that the variations of Hodge structures determine `canonical' or `flat' coordinates on the respective moduli spaces: roughly, this is something like an affine structure (the procedure for determining these coordinates is due to Saito \cite{Saito1983}, see \cite{coxkatz} for an explanation in the setting of mirror symmetry). 
If closed-string mirror symmetry holds, it follows that $\Psi$ must match up these canonical coordinates, and this determines $\Psi$ uniquely up to a linear change of variables. 
This linear change of variables is fixed by knowing the derivative of $\Psi$ at the origin. 

In the previous sections we described a set of hypotheses (including `HMS holds at the large volume/large complex structure limit') and how they should imply HMS in the form of Conjecture \ref{conj:hmsvers}. 
While it is not the case that this version of HMS implies closed-string mirror symmetry in any useful sense (at least as far as I understand), in this section we explain how the same set of hypotheses leads to a proof of closed-string mirror symmetry involving the same mirror map $\Psi$. 
As discussed above, this determines $\Psi$ up to a linear change of variables which is fixed by knowing the derivative of $\Psi$ at the origin. 
This derivative corresponds to the composition of isomorphisms \eqref{eqn:comm}, so if we can compute that then $\Psi$ is determined uniquely. 

To summarize: versality gives us a mirror map $\Psi$ whose derivative at the origin is determined by \eqref{eqn:comm}, but does not give any information about its higher-order derivatives.
However, these higher-order derivatives turn out to be determined uniquely by Hodge theory.

The basic idea is that, associated to a family of $A_\infty$ categories $\cA$ over $\cM$, there is a variation of Hodge structures $HP_\bullet(\cA)$ over $\cM$ called the \emph{periodic cyclic homology of $\cA$} (the connection was constructed by Getzler \cite{Getzler1993}; see \cite{Katzarkov2008}, \cite{Costello2009} or \cite{Sheridan2015a} for more precise statements). 
It is convenient for us to work exclusively with variations of Hodge structures over the formal punctured disc $\spec(\C\laurents{q})$. 

Let $p$ be a $\C\laurents{q}$-point of $\mhatcpx(X^\circ)$ such that $X^\circ_p$ is smooth. 
Then there is an isomorphism of bundles over the formal punctured disc
\begin{equation}
HP_\bullet(\Dcoh(X^\circ_p)) \cong V^B(X^\circ_p)
\end{equation}
which is conjectured to be an isomorphism of variations of Hodge structures (see \cite[Conjecture 1.14]{Ganatra2015}; as explained there, we believe this is `standard' in the right community but the precise statement has not appeared in the literature). 
Under our previous assumption that $\cB_p$ split-generates $\Dcoh(X^\circ_p)$, we have $HP_\bullet(\Dcoh(X^\circ_p)) \cong HP_\bullet(\cB_p)$ by Morita invariance of periodic cyclic homology.

On the mirror, let us suppose that we have an \emph{integral} complexified relative K\"ahler class: i.e., $[\omega^\C;\theta^\C] \in H^2(X,X \setminus D;\R \oplus \iii \Z)$. 
Then we can define a corresponding $\C \laurents{q}$-point $p$ of $\mhatkah(X,D)$ by \eqref{eqn:lambdapoint}, and there is a homomorphism of variations of Hodge structures called the \emph{cyclic open--closed map}
\begin{equation}
\wt{\EuO\EuC}: HP_\bullet(\fuk(X,D)^\bc_p) \to V^A(X)_p
\end{equation}
(this was announced in \cite{Ganatra2015}, but we believe it to be implicit in \cite{Costello2007}; the morphism of bundles will be constructed in \cite{Ganatraa}, and it will be proved to be a morphism of variations of Hodge structures in \cite{Ganatra2015a}). 
Under our previous assumption that $\cB_{\Psi(p)}$ split-generates $\Dcoh(X^\circ_{\Psi(p)})$, the restriction of $\wt{\EuO\EuC}$ to the subcategory $\cA^\bc_p$ is an isomorphism 
\[ \wt{\EuO\EuC}: HP_\bullet(\cA^\bc_p) \xrightarrow{\sim} V^A(X)_p\]
(this is a consequence of \cite{Ganatra2016} combined with \cite[Theorem 5.2]{Ganatra2015}). 

As a consequence, the embedding $\Psi^*\cB \subset \cA^\bc$ determines isomorphisms $V^A(X)_p \cong V^B(X^\circ)_{\Psi(p)}$ for all such $p$. 
Thus we have proved closed-string mirror symmetry over certain formal punctured discs in the moduli space. 
This is not quite the same as a complete proof of closed-string mirror symmetry, but it serves the same purpose: in particular it determines the mirror map uniquely (see \cite[Appendix C]{Sheridan2017} for precise statements), and it also allows us to extract all of the enumerative information that we want from closed-string mirror symmetry at genus zero.

\begin{rmk}
It is expected or hoped that the higher-genus versions of closed-string mirror symmetry can also be recovered from HMS: see \cite{Kontsevich2003a,Costello2007,Kontsevich2006a,Costello2009,Caldararu2017}.
\end{rmk}

\begin{rmk}
\label{rmk:nofuksneeded}
Note that we did not need the category $\fuk\left(X,\omega^\C\right)$ to prove closed-string mirror symmetry: the relative Fukaya category $\fuk(X,D)$ sufficed.
\end{rmk}

\section{The wrapped Fukaya category}
\label{sec:skel}

\subsection{HMS at the limit points}

In our application of versality in \S \ref{sec:provehms}, our first step when trying to identify the subcategories $\cA \subset \fuk(X,D)$ and $\cB \subset \Dcoh(\EuX^\circ)$ was to identify them `at the limit': $\cA_0 \simeq \cB_0$. 
It might be tempting to conjecture that one can go beyond these subcategories, and identify 
\[ \Dfuk(X \setminus D) \simeq \Dcoh(X^\circ_0).\]
 
However these categories can't be quasi-equivalent: as soon as $X^\circ_0$ is singular, there exist objects (e.g., a skyscraper sheaf at a singular point) whose endomorphism algebra in the derived category is infinite-dimensional.
Such a sheaf can't be mirror to a compact Lagrangian $L \subset X \setminus D$, since a generic perturbation of $L$ off of itself will intersect it in finitely many points so $hom^*_{\fuk(X \setminus D)}(L,L)$ is always finite-dimensional. 

Instead, one expects that HMS at the large volume/large complex structure limit should give quasi-equivalences
\begin{equation}
\label{eqn:hmslvl}
 \xymatrix{ \Dwfuk(X \setminus D) \ar@{<->}[r]^-\sim \ar@{}^\bigcup[d] & \Dcoh(X^\circ_0) \ar@{}^\bigcup[d] \\
\Dfuk(X \setminus D) \ar@{<->}[r]^-\sim & \perf(X^\circ_0).}
\end{equation}
Here, $\perf(X^\circ_0) \subset \Dcoh(X^\circ_0)$ is the subcategory generated by locally-free coherent sheaves (it has finite-dimensional morphism spaces).
The other category we've introduced is the \emph{wrapped Fukaya category} $\wfuk(X \setminus D)$ \cite{Abouzaid2007}. 
This is a category which admits possibly non-compact Lagrangians $L \subset X \setminus D$ (which however are required to go to infinity in a prescribed way), and whose morphism spaces $hom^*_{\wfuk(X \setminus D)}(K,L)$ count intersection points between a $K$ and a `wrapped' version of $L$ (the `wrapping' is by the same Hamiltonian flow whose orbits are the generators of symplectic cohomology). 
It can in particular have infinite-dimensional morphism spaces. 
See \cite{Auroux2013} for an expository account of the wrapped Fukaya category.

One nice thing about the wrapped Fukaya category is that it is expected that it should always have `enough objects' when $X \setminus D$ is an affine variety: it should be split-generated by the ascending manifolds of the middle-dimensional critical points of a plurisubharmonic Morse function \cite{Bourgeois2009a,Ganatra2013}. 
In contrast, there is no such natural source of compact Lagrangians in $X \setminus D$, so we have no guarantee that $\Dfuk(X \setminus D)$ is well-behaved (it might even have an empty set of objects). 

The subcategory $\perf(X^\circ_0) \subset \Dcoh(X^\circ_0)$ can be characterized as the subcategory of \emph{proper} objects $K$, i.e., those such that $hom^*_{\Dcoh(X^\circ_0)}(K,L)$ has finite-dimensional cohomology for all $L$ \cite[Lemma 3.11]{Ballard2011}. 
Any object of the subcategory $\fuk(X \setminus D) \subset \wfuk(X \setminus D)$ also has this property, because a compact Lagrangian $K \subset X \setminus D$ will only intersect $L$ in finitely many points, even if $L$ is non-compact and gets wrapped (this property persists under the formal enlargement procedure denoted `$\mathsf{D}$'). 
However there may exist proper objects which do not come from compact Lagrangians, so \eqref{eqn:hmslvl} will have a better chance of being true if we replace the lower left-hand corner by the category $\Dwfuk(X \setminus D)^{prop}$ of proper objects of the wrapped Fukaya category.

\subsection{Computing the wrapped category}
\label{subsec:wrapcomp}

The wrapped Fukaya category is expected to exhibit `sheafy' behaviour. 
A result in this direction was proved in \cite{Abouzaid2007}, which (besides giving the original definition of the wrapped Fukaya category) established the existence of restriction maps to certain subdomains. 
A conjecture in a rather different direction was made by Kontsevich \cite{Kontsevich2009}.
He considered the `skeleton' of $X \setminus D$, which is the limit of the reverse Liouville flow: it is an isotropic cell complex which is a deformation retract $ \Lambda \subset X \setminus D$ \cite{Biran2001}.
His conjecture said that the wrapped Fukaya category should be the the global sections of some cosheaf of categories on this skeleton, defined in terms of its local geometry.  

This idea was developed by several authors \cite{Abouzaid2011c,Sibilla2014,Pascaleff2016,Sylvan2016}. 
Nadler in particular has made it more precise, by describing a sufficiently generic class of skeleta to work with (the `arboreal' ones), and formulating Kontsevich's conjecture as a quasi-equivalence 
\begin{equation}
\label{eqn:mush}
 \mu Sh_\Lambda(X \setminus D)^w \simeq \Dwfuk(X \setminus D)
 \end{equation}
where the left-hand side is the category of `wrapped microlocal sheaves' supported along such a skeleton $\Lambda$ \cite{Nadler2014,Nadler2017,Nadler2017a,Nadler2015}. 
It has been announced in \cite{Gammage2017} that this equivalence will be established in work-in-preparation of Ganatra--Pardon--Shende, building on the partial results in \cite{Ganatra2017}.

The nice thing about wrapped microlocal sheaves is that they are \emph{computable}. 
In contrast, direct computation of the wrapped Fukaya category requires us to enumerate pseudoholomorphic discs, and there's no systematic way to do that (we made the point in Remark \ref{rmk:enumless} that versality ideas reduce us to a finite number of such enumerations, but we still have to do them somehow!). 
Thus it may be easier to prove 
\begin{equation}
\label{eqn:hmsmush} \mu Sh_\Lambda(X \setminus D)^w \simeq \Dcoh(X^\circ_0),
\end{equation}
which implies HMS at the large volume limit, in the sense of the top line of \eqref{eqn:hmslvl}, if \eqref{eqn:mush} holds.
This has been the approach taken in \cite{Nadler2016,Nadler2016a,Nadler2017,Gammage2017}, where versions of the HMS equivalence \eqref{eqn:hmsmush} have been proved in various cases.

\subsection{Deforming the wrapped category}
\label{subsec:defwrap}

Since the wrapped Fukaya category is so much better-behaved and more computable than the compact Fukaya category $\fuk(X \setminus D)$, it's tempting to try to incorporate it into the strategy outlined in \S \ref{sec:provehms}. 
The problem is that it's not obvious what the analogue of $\fuk(X,D)$ should be. 
Whereas it's easy to deform $\fuk(X \setminus D)$ by counting holomorphic discs passing through $D$, it's not immediately clear how to deal with discs passing through $D$ but having boundary on objects of $\wfuk(X \setminus D)$, which may be non-compact Lagrangians that approach $D$ in a complicated way. 

There is a formal solution to this problem, building on ideas of \cite{Seidel2002,Fabert2014}: in work-in-preparation with Borman \cite{Borman2015,Borman2016}, we construct a Maurer--Cartan element in the $L_\infty$ algebra $SC^*(X \setminus D)$ (under rather restrictive hypotheses on $D$ however: we require each irreducible component to be ample). 
The pushforward of this Maurer--Cartan element via the closed--open map 
\[ \EuC\EuO: SC^*(X \setminus D) \dashrightarrow CC^*(\fuk(X \setminus D))\]
should be gauge equivalent to the Maurer--Cartan element corresponding to the deformation $\fuk(X,D)$ of $\fuk(X \setminus D)$. 
The closed--open map should extend to an $L_\infty$ homomorphism 
\[\EuC\EuO: SC^*(X \setminus D) \dashrightarrow CC^*(\Dwfuk(X \setminus D))\] 
(the leading-order term is constructed in \cite{Ganatra2013}). 
It follows that we can define a curved $A_\infty$ category by deforming the wrapped category by the image of the Maurer--Cartan element under this morphism. 
Passing to bounding cochains, then taking the fibre over the appropriate $\Lambda$-point $p$, we obtain a $\Lambda$-linear $A_\infty$ category which we denote by $\Dwfuk(X,D)^\bc_p$.

This leads us to the following idea: perhaps there is a `better' Fukaya category $\tilde{\fuk}\left(X,\omega^\C\right)$, such that:
\begin{itemize}
\item $\Dfuk\left(X,\omega^\C\right)^\bc \subset \tilde{\fuk}\left(X,\omega^\C\right)$ is a full subcategory;
\item For appropriate divisors $D \subset X$, the category of proper objects of $\Dwfuk(X,D)^\bc_p$ embeds into $\tilde{\fuk}\left(X,\omega^\C\right)$.
\end{itemize}
It's not clear exactly how to construct such a category -- it would likely involve incorporating additional objects supported on singular Lagrangians (e.g., skeleta of divisor complements).\footnote{Of course the idea to include singular Lagrangians in the Fukaya category is of interest in other contexts, for example in family Floer theory \cite{Fukaya2002b}.}
Whatever the construction, it would certainly have a payoff: by analogy with our previous discussion, $\tilde{\fuk}$ would have a better chance of `having enough objects to be well-behaved'; and the large volume/large complex structure limit HMS equivalences described in \S \ref{subsec:wrapcomp} could be combined with the versality techniques of \S \ref{sec:provehms} to prove mirror symmetry for compact Calabi--Yau varieties in the form
\[ \tilde{\fuk}\left(X,\omega^\C\right) \simeq \Dcoh\left(X^\circ_{\Psi(p)}\right).\]
For example, one could hope to prove HMS for many Batyrev mirrors via this approach, using the results of \cite{Gammage2017}.

\begin{rmk}
We suggest that $\Dfuk\left(X,\omega^\C\right)^\bc \subset \tilde{\fuk}\left(X,\omega^\C\right)$ would be a useful condition because it would allow us to address questions about symplectic topology of $(X,\omega)$ (for example, ruling out the existence of closed Lagrangian submanifolds with a given topology \`a la \cite{Abouzaid2010d}) without ever having to construct or compute with a non-trivial closed Lagrangian in $X$.
\end{rmk}

\begin{rmk}
\label{rmk:nofuksgiven}
If what one wants is to prove closed-string mirror symmetry, then in light of Remark \ref{rmk:nofuksneeded} it is not necessary to construct the category $\tilde{\fuk}$: instead, one would need to construct the cyclic open--closed map
\[ \tilde{\EuO\EuC}: HP_\bullet\left(\Dwfuk(X, D)^\bc\right) \to V^A(X)\]
and show it is a morphism of variations of Hodge structures. 
This is a much more tractable question, and in fact Ganatra has constructed the analogue of this map in the absence of the deformation by the Maurer--Cartan element \cite{Ganatraa}.
\end{rmk}


\bibliographystyle{amsalpha}
\bibliography{../../library}

\textsc{\small N. Sheridan, Centre for Mathematical Sciences, University of Cambridge, England.}\\
\textit{\small Email:} \texttt{N.Sheridan@dpmms.cam.ac.uk}\\

\end{document}